\documentclass[12pt]{extarticle}
\usepackage{amsmath, amsthm, amssymb, color}
\usepackage{graphicx}
\usepackage{caption}
\usepackage{mathtools}
\usepackage{enumitem}
\usepackage{verbatim}
\usepackage{longtable}
\usepackage{pifont}
\usepackage{makecell}
\usepackage{tikz}
\usetikzlibrary{matrix}
\usetikzlibrary{arrows}
\usepackage[normalem]{ulem}
\usepackage{subcaption}
\usepackage{xcolor}
\usepackage{tikz-cd}
\usepackage{url}

\usepackage[
backend=biber,
style=numeric
]{biblatex}
\usepackage[colorlinks,plainpages,hypertexnames=false,plainpages=false]{hyperref}
\hypersetup{urlcolor=black, citecolor=black, linkcolor=black}

\oddsidemargin \evensidemargin
\usepackage{makecell}
\usepackage{multirow,array}

\newtheorem*{zz*}{Zu zeigen} 
\newtheorem{theorem}{Theorem}
\newtheorem*{satz*}{Satz}
\newtheorem{proposition}[theorem]{Proposition}
\newtheorem{lemma}[theorem]{Lemma}
\newtheorem{corollary}[theorem]{Corollary}

\theoremstyle{definition}
\newtheorem{definition}[theorem]{Definition}

\newtheorem{remark}[theorem]{Remark}

\newtheorem{conventions}[theorem]{Conventions}
\newtheorem{question}[theorem]{Question}
\newtheorem*{claim*}{Claim}
\numberwithin{theorem}{section}
\newtheorem{innercustomthm}{Theorem}
\newenvironment{customthm}[1]
{\renewcommand\theinnercustomthm{#1}\innercustomthm}
{\endinnercustomthm}

\newcommand{\PP}{\mathbb{P}}

\newcommand{\CC}{\mathbb{C} }
\newcommand{\ZZ}{\mathbb{Z}}
\newcommand{\NN}{\mathbb{N}}

\newcommand{\Aff}{\mathbb{A}}
\newcommand{\Pic}{\operatorname{Pic}}

\newcommand{\cA}{\mathcal{A}}
\newcommand{\cB}{\mathcal{B}}

\newcommand{\OO}{\mathcal{O}}
\newcommand{\cH}{\mathcal{H}}
\newcommand{\cE}{\mathcal{E}}

\newcommand{\cF}{\mathcal{F}}
\newcommand{\cV}{\mathcal{V}}

\newcommand{\cW}{\mathcal{W}}

\newcommand{\op}{\operatorname}

\DeclareMathOperator{\End}{\mathcal{E}nd}

\renewcommand{\tilde}{\widetilde}		

\let\div\relax							
\DeclareMathOperator{\div}{div}

\newcommand{\thistheoremname}{}
\newtheorem*{genericthm*}{\thistheoremname}
\newenvironment{namedthm*}[1]
{\renewcommand{\thistheoremname}{#1}%
	\begin{genericthm*}}
	{\end{genericthm*}}
\newenvironment{namedtheorem*}[1]
{\renewcommand{\thistheoremname}{#1}%
	\begin{genericthm*}}
	{\end{genericthm*}}

\title{The restricted Hitchin map of wobbly vector bundles}
\author{Clemens Nollau}
\date{}
\begin{document}
	\maketitle
	\begin{abstract}
		This article studies the restricted Hitchin map $h_V$ of a stable rank 2 vector bundle
		$V$ on a smooth projective curve $C$. This map associates to a trace-free twisted endomorphism
		$\varphi : V \to V \otimes K_C$ its determinant, which is a quadratic differential. We show that if $V$ is a general wobbly vector bundle, then it has a single nilpotent twisted endomorphism, up to scalars. As a consequence we show that $h_V$ is generically finite and we compute its degree. Furthermore, we show that if $C$ is not hyperelliptic, then the image of $h_V$ contains a quadratic differential with simple zeros. This is equivalent to saying that there is a smooth spectral curve associated to a twisted endomorphism of $V$.
	\end{abstract}
	\section{Introduction} 
	 Let $C$ be a smooth projective curve of genus $g \geq 2$ over $\CC$ and let $V$ be a stable vector bundle of rank $2$ on $C$. The space of trace-free twisted endomorphisms of $V$ is 
	 \begin{equation*}
	 	H^0(C,\End_0(V)\otimes K_C) = \left\{ \phi\in \op{Hom}(V,V\otimes K_C) \mid \operatorname{tr}(\phi) = 0\right\}.
	 \end{equation*}
	  We are interested in the following map: 
	 \begin{equation*}
	 	h_V: H^0(C,\End_0(V)\otimes K_C) \to H^0(C,K_C^2), \quad\phi \mapsto \det \phi.
	 \end{equation*}
	 It is called the \textit{restricted Hitchin map}.
	 The source and the target both have dimension $3g-3$. Hence, it is natural to ask the following. 
	 \begin{question}\label{question}
	 	Assume $V$ is a stable vector bundle. Is $h_V$ dominant?
	 \end{question}
	 We remark that the same question can be asked for any vector bundle of rank $2$ but as noticed by Will Sawin \cite{willsawin} dominance can already fail for semistable vector bundles: If $C$ is a hyperelliptic curve of genus $g\geq 3$ and $V = \OO_C^{\oplus 2}$, its restricted Hitchin map is not dominant because the multiplication map $\op{Sym}^2H^0(K_C)\to H^0(K_C^2)$ is not surjective.
	 
	 Let us indicate the origin of the restricted Hitchin map: Consider $\op{Bun}^{s}_{2,\Lambda}(C)$ the moduli space of stable vector bundles on $C$ of rank $2$ with determinant $\Lambda$. By deformation theory the space of trace-free twisted endomorphism of $V$ is the cotangent space of 
 $\op{Bun}^s_{2,\Lambda}(C)$ at $V$. The cotangent bundle $T^\vee\op{Bun}^s_{2,\Lambda}(C)$ can be partially compactified by the moduli space $\cH_C(2,\Lambda)$ of semistable $\op{SL}_2$-Higgs bundles with determinant $\Lambda$. The Hitchin map is then defined as follows:
	 \begin{equation*}
	 	h: \cH_C(2,\Lambda) \to H^0(C,K_C^2),\quad (V,\phi) \mapsto \det \phi.
	 \end{equation*}
	 In \cite{hitchin} Hitchin showed that $\cH_C(2,\Lambda)$ is an algebraic symplectic variety and that $h$ is a Lagrangian fibration. This means $h$ is proper, its fibers are Lagrangian subvarieties and the general fibers are Prym varieties of spectral curves. Let us introduce the spectral curves in question. Consider the total space of the canonical line bundle $\Aff(K_C)$. Let $\tau$ be its tautological section. If $\eta $ is a quadratic differential in $H^0(K_C^2)$, we define the corresponding spectral curve by 
	 \begin{equation*}
	 	\tilde{C} := V(\tau^2 - \eta) \subset \Aff(K_C).
	 \end{equation*}
	 It comes with a double cover $\psi :\tilde{C} \to C$. Now we can state the Beauville-Narasimhan-Ramanan (BNR) correspondence \cite{hitchin},\cite{bnr}:
	 Assume the spectral curve corresponding to the quadratic differential $\eta$ is integral. Then there is a bijection:
	 \begin{align*}
	 	\left\{ \begin{array}{c}
	 		\text{$M$ torsion-free sheaf of rank $1$ on $\tilde{C}$} \\ \text{such that $\det \psi_*M \cong \Lambda$}
	 	\end{array}\right\} &\leftrightarrow h^{-1}(\eta) \\
	 	M &\mapsto (\psi_*M,\phi).
	 \end{align*}
	 Here the twisted endomorphism $\phi$ is induced by multiplication with $\tau : M \to M\otimes \psi^*K_C$.
	 Let us assume that $\tilde{C}$ is smooth. Then the fiber is a generalized Prym variety. 
	 
	 The BNR-correspondence can be applied to the program of extending Brill-Noether theory to classes of algebraic curves which are not general in the moduli space of curves. Here we consider the class of spectral curves $\tilde{C}$ defined above. One challenge is to prove the existence of line bundles on $\tilde{C}$ with many global sections. To do so, one starts with a stable vector bundle $V$ of rank $2$ with large $h^0(V)$. Then there exists a line bundle $M$ on $\tilde{C}$ with $\psi_*M \cong V$ if and only if $h_V^{-1}(\eta)$ is nonempty. Hence, if we can prove that $h_V$  is dominant, such a line bundle exists for a general spectral curve over $C$. This connects the Brill-Theory of vector bundles with the Brill-Noether theory of spectral curves. The author started an investigation in this direction in \cite{nollau}. 
	 Generalizations of the restricted Hitchin map provide insight into the Brill-Noether theory of other classes of curves: 
	 In \cite[Lemma 3.2]{larsonvemulapalli} the dominance of a certain map is proven to deduce non-emptiness of Brill-Noether loci of curves lying on Hirzebruch surfaces.
	 
	  As an indication that the restricted Hitchin map might be dominant for every stable vector bundle of rank $2$ we prove in section \ref{section:smooth} the following:
	 \begin{customthm}{A}\label{thm:A}
	 		\emph{
	 		Let $C$ be a smooth projective curve of genus $g \geq 2$ that is not hyperelliptic and take $V$ to be a stable vector bundle of rank $2$ on $C$. Then there exists a twisted endomorphism $\phi \in H^0(\End_0(V)\otimes K_C)$ whose spectral curve is smooth.}
	 \end{customthm} 
	 
	A second reason to study the restricted Hitchin map is that it gives a better understanding of wobbly vector bundles. They are defined as follows: 
	\begin{definition}
		A stable vector bundle $V$ of rank $2$ is said to be \textit{very stable} if the only nilpotent twisted endomorphism it admits is $0 $. If $V$ is stable but not very stable, it is called \textit{wobbly}.
	\end{definition}
	In \cite{nietopauly} it was shown that the restricted Hitchin map is quasi-finite if and only if $V$ is very stable. Therefore, to answer Question \ref{question} we need to concentrate on wobbly vector bundles. This paper shows that the geometry of $V$ is reflected in the behaviour of $h_V$. We make the following basic observation: If $\phi$ is a nilpotent twisted endomorphism, then its kernel is a subbundle $L$ such that $\op{Hom}(L,V/L\otimes K_C) \neq 0$. We give line subbundles of this type a name: 
	\begin{definition}
		A line subbundle $L$ of $V$ is said to be \textit{bad} if $\op{Hom}(V/L,L\otimes K_C) \neq 0$.
	\end{definition}
	It is clear that  a stable $V$ is wobbly if and only if it has a bad line subbundle. 
	The locus of wobbly vector bundles in $\op{Bun}^s_{2,\Lambda}(C)$ is a divisor in $\op{Bun}^s_{2,\Lambda}(C)$ by \cite[Theorem 1.1]{PalPauly} and its irreducible components are known. To explain this result, we need the following definition:
	\begin{definition}
		The algebraic set consisting of all wobbly vector bundles in $\op{Bun}^s_{2,\Lambda}(C)$ is the \textit{wobbly locus} $\cW$. Define the following stratification of $\cW$: 
		\begin{align*}
			\cW^0_{k} &:= 
			\left\{ V \in \op{Bun}^s_
			{2,\Lambda}(C) \;\middle|\; 
			\begin{array}{l}
				\text{$V$ has a bad line subbundle $M$} \\ \text{ of degree $1 - k$}
			\end{array}
			\right\}\\
			&\text{for $2-\lambda \leq k \leq g-\lambda$}.
		\end{align*}
		We also define $\cW_k$ to be the closure of $\cW_k^0$ in $\cW$.
	\end{definition}
	Let us assume from now on that $\lambda := \deg \Lambda$ is in $\{0,1\}$. This is no restriction of generality because by twisting our vector bundles $V$ with line bundles we can always reach $\deg V \in \{0,1\}$. It was shown in \cite[Theorem 1.1]{PalPauly} that $\cW$ has the following decomposition:
	\begin{equation*}
		\cW = \cW_{\lceil\frac{g-\lambda}{2}\rceil} \cup \ldots \cup \cW_{g-\lambda}.
	\end{equation*}
	Furthermore, all $\cW_k$ appearing above are irreducible, except for $\cW_g$ in the case $\lambda = 0$, which is the union of $2^{2g} $ irreducible components. Let us recall the parametrization of the stratum $\cW_k^0$ used in \cite{PalPauly}. Define the following $Z_k \subset \op{Pic}^{1-k}(C)$ by  
	\begin{equation*}
		Z_k := \left\{M \in \op{Pic}^{1-g}(C) \mid h^0(M^2 \otimes \Lambda^\vee \otimes K_C) > 0\right\}
	\end{equation*}
	These are all line bundles which are a bad line subbundle of some vector bundle in $\op{Bun}_{2,\Lambda}(C)$. By Riemann-Roch we have $\dim \op{Ext}^1(M^\vee\otimes \Lambda,M) = g+2k+\lambda-3$ for all $M \in Z_k$. There exists a vector bundle $\cV_k$ on $Z_k$ whose fiber at $M$ is $\op{Ext}^1(M^\vee\otimes \Lambda,M)$. Consider all extensions $[\xi] \in \PP(\cV_k)$ such that the induced vector bundle $V_\xi$ is stable. They form an open subset of $\PP(\cV)_k$ which we denote by $\PP(\cV_k)^s$. Then there is a surjective map:
	\begin{equation*}
		\phi_k: \PP (\cV_k) \to \cW_k^0, \quad [\xi] \mapsto V_\xi.
	\end{equation*} 
	 
	In section \ref{section:unique_bad} we prove for the general element of each stratum of $\cW$ the following:
	\begin{customthm}{B}\label{thm:B}
		\emph{
			Fix $\Lambda \in \Pic^\lambda(C)$ and $k$ with $2-\lambda \leq k \leq g-\lambda$.
		Let $V$ be a be a general element of $\cW_k^0$. Then $V$ has a unique bad line subbundle.}
	\end{customthm} 
	The proof is based on dimension counting: We will show that the set of all extensions $[\xi] \in\PP(\cV_k)^s$ for which $V_\xi$ admits a second bad line subbundle is a proper subvariety.
	
	The fact that the general wobbly vector bundle $V$ in each irreducible component of $\cW$ has this simple structure makes it possible to analyze $h_V$ in more detail. We do this by considering the rational map 
	\begin{equation*}
			\PP h_V:	\PP H^0(\End_0(V) \otimes K_C) \dashrightarrow \PP H^0(K_C^2), \quad[\phi] \mapsto [\det \phi].
	\end{equation*}
 We show how to resolve the indeterminacy by blowing up twice and we obtain the following:
	\begin{customthm}{C}\label{thm:C}
		\emph{Assume $V$ is a general vector bundle in $\cW_k$, where $\lceil\frac{g-\lambda}{2}\rceil \leq k \leq g-\lambda$. 
		Then the following holds:
		\begin{itemize}
			\item[(i)] If $\eta \in H^0(K_C^2)$ defines an integral spectral curve, then $h_V^{-1}(\eta)$ is finite. In particular, $h_V$ is dominant.
			\item[(ii)] $	\deg h_V = 2^{3g-3} - 2^{2g-2k - \lambda +1}$.
		\end{itemize} }
	\end{customthm}
One can compare this to the degree of $h_V$ when $V$ is very stable. It is shown in \cite[Remark 5.4]{bnr} that $\deg h_V = 2^{3g-3}$. It would be interesting to compute the degree for the general wobbly vector bundles in the remaining strata $\cW_k^0$ and also for wobbly vector bundles of higher rank. The proof of Theorem \ref{thm:C} is split up in two parts. In section \ref{section:blowup} we resolve the indeterminacy of $\PP h_V$ while in section \ref{section:blowup} we analyze the resulting blowup in general.

 Section \ref{section:reducible} is concerned with the fibers of the restricted Hitchin map $h_V^{-1}(\eta)$ when $\eta$ defines a reducible spectral curve. A Higgs bundle $(V,\phi)$ with a reducible spectral curve corresponds roughly to the choice of two line subbundles of $V$. For wobbly vector bundles we see that the fibers $h_V^{-1}(\eta)$ will in general be positive dimensional. This result is probably well known but we include it since it gives qualitative insight into the restricted Hitchin map.  
\subsection*{Acknowledgements} 
I would like to thank my PhD-advisor, Daniele Agostini for encouragement and useful feedback on this project, which greatly improved it a lot. Thanks also to Tony Pantev, Duong Dinh, and Angela Ortega for useful discussions on the topic.
	\section{Preliminaries}
	\begin{conventions}
		All schemes we consider are $\CC$-schemes and unless said otherwise a point of a scheme is understood to be a $\CC$-point.
		Throughout the article we will use Fulton's convention for projectivization: If $U$ is a vector space, the points of $\PP U$ are one-dimensional subspaces of $U$. If $X$ is a scheme and $L$ is a line bundle on it, we write $\phi_L: X\dashrightarrow\PP H^0(X,L)^\vee$ for the rational map induced by the complete linear system of $L$. A subsheaf $U$ of a vector bundle $V$ will be called a subbundle if $V/U$ is a vector bundle. Throughout the text $C$ is a smooth projective curve of genus $g \geq 2$ unless stated otherwise.
	\end{conventions}
	Let $V$ be a vector bundle of rank $2$ on $C$.
	First we compute the differential of the restricted Hitchin map $h_V$. For $\phi \in \op{H}^0(\End_0(V) \otimes K_C)$ we have $\det \phi  = \frac{1}{2}\op{tr}(\phi^2)$. This implies that 
	\begin{equation*}
		d_\phi h_V: H^0(\End_0(V)\otimes K_C) \to H^0(K_C^2),\quad \psi \mapsto \op{tr}(\psi\phi).
	\end{equation*}
	Now we assume $\phi$ is nilpotent with kernel $L_1$. Write $L_2$ for the line bundle $V/L_1 \cong \op{im}(\phi)$. The twisted endomorphism $\phi$ is induced by a morphism $\phi_{12}: L_2 \to L_1\otimes K_C$. Furthermore, $V$ is an extension $\xi$ of $L_2$ by $L_1$. We denote the natural map $H^0(\End_0(V)\otimes K_C) \to \op{Hom}(L_1,L_2\otimes K_C)$ by $c_\xi$. Take $\psi \in H^0(\End_0(V) \otimes K_C)$. Let us compute $\op{tr}(\psi\phi)$: The composition $\psi \phi$ is zero on $L_1$. Therefore, it comes from a map $L_2 \to V\otimes K_C^2$. If we compose $L_2 \to V\otimes K_C^2$ with $V \otimes K_C^2 \to L_2\otimes K_C^2$, we get multiplication by $c_\xi(\psi)\cdot \phi_{12}$. Hence, we obtain:
	\begin{equation}\label{eq:diff_nilpotent}
		d_\phi h_V(\psi) = \op{tr}(\psi\phi) =  \phi_{12} c_\xi(\psi).
	\end{equation}
	This shows the following:
	\begin{lemma}\label{lemma:ker_diff}
		Assume $V$ is a vector bundle of rank $2$ and $\phi$ is a nonzero nilpotent twisted endomorphism of $V$. Then the kernel of $d_\phi h_V$ is given by $\ker c_\xi$. 
	\end{lemma}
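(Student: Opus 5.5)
The plan is to read the lemma off directly from formula \eqref{eq:diff_nilpotent}. Everything reduces to showing that multiplication by the fixed nonzero section $\phi_{12}$ is injective on $\op{Hom}(L_1,L_2\otimes K_C)$.

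First I set up the identifications already made in the text. Since $\phi$ is nonzero and nilpotent, its kernel $L_1$ is a line subbundle. Indeed, the kernel of a map of vector bundles on a smooth curve is saturated. The quotient $L_2 = V/L_1$ is then a line bundle, and $\phi$ factors as
\begin{equation*}
V \to L_2 \xrightarrow{\phi_{12}} L_1\otimes K_C \hookrightarrow V\otimes K_C,
\end{equation*}
with $\phi_{12}\neq 0$ because $\phi\neq 0$. Here nilpotency ($\phi^2=0$) is what forces the image to lie in $L_1\otimes K_C$. For any $\psi\in H^0(\End_0(V)\otimes K_C)$, formula \eqref{eq:diff_nilpotent} gives $d_\phi h_V(\psi) = \phi_{12}\cdot c_\xi(\psi)$. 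This product is taken in $H^0(L_2^\vee\otimes L_1\otimes K_C)\otimes H^0(L_1^\vee\otimes L_2\otimes K_C)\to H^0(K_C^2)$. I would briefly double-check this computation: $\psi\phi$ vanishes on $L_1$, and the trace of an endomorphism preserving the flag $0\subset L_1\subset V$ and killing $L_1$ equals its induced action on $L_2$. That induced action is $c_\xi(\psi)\circ\phi_{12}$, read through the identifications.

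Next I prove the key step. Since $C$ is integral and $\phi_{12}$ is a nonzero section of a line bundle, $\phi_{12}$ vanishes only at finitely many points. Multiplication by it is therefore an injective map of sheaves
\begin{equation*}
L_1^\vee\otimes L_2\otimes K_C \to K_C^2,
\end{equation*}
and injectivity persists on global sections. Hence $\phi_{12}\cdot s = 0$ implies $s=0$ for any $s\in\op{Hom}(L_1,L_2\otimes K_C)$.

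Combining the two steps, $d_\phi h_V(\psi)=0$ holds if and only if $c_\xi(\psi)=0$, so $\ker d_\phi h_V = \ker c_\xi$. There is no serious obstacle. The only point needing care is the verification of the trace identity \eqref{eq:diff_nilpotent}, which the paper has already carried out.
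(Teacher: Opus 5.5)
Your proposal is correct and matches the paper's argument: the lemma is read off from (\ref{eq:diff_nilpotent}), $d_\phi h_V(\psi)=\phi_{12}\,c_\xi(\psi)$, combined with the injectivity of multiplication by the nonzero section $\phi_{12}$ on the integral curve $C$, which the paper leaves implicit and you state explicitly. (For trace-free rank-$2$ endomorphisms one actually has $\det\phi=-\frac{1}{2}\op{tr}(\phi^2)$, so the differential is $-\op{tr}(\psi\phi)$; the sign does not affect the kernel.)
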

	We can describe the image of $c_\xi$ using the cup product map $\xi \cup (-): H^0(L_1^\vee\otimes L_2\otimes K_C) \to H^1(K_C)$.
	\begin{lemma}\label{lemma:image_c}
		Assume $V$ is a stable vector bundle of rank $2$ obtained from an extension $\xi$ of line bundle $L_2$ by a line bundle $L_1$. Assume furthermore that $h^0(L_1^\vee \otimes V) = 1$. Then 
		\begin{equation*}
			\op{im}(c_\xi) = \ker(\xi \cup (-))
		\end{equation*}
		holds. In particular, we have $\dim \op{im}(c_\xi) = h^0(L_1^\vee\otimes L_2\otimes K_C) -1$.
	\end{lemma}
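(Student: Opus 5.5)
Write $0 \to L_1 \to V \to L_2 \to 0$ for the extension $\xi$. The idea is to express $c_\xi$ as a composite of natural maps and read off its image from two long exact cohomology sequences.

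\textbf{Step 1: factor $c_\xi$.} An element $\psi \in H^0(\End_0(V)\otimes K_C)$ is sent by $c_\xi$ to the composite $L_1 \to V \xrightarrow{\psi} V\otimes K_C \to L_2\otimes K_C$. Hence $c_\xi$ factors through $\op{Hom}(L_1, V\otimes K_C) = H^0(L_1^\vee\otimes V\otimes K_C)$.

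\textbf{Step 2: the image of the second map.} Tensor the extension with $L_1^\vee\otimes K_C$ to get
\begin{equation*}
0\to K_C \to L_1^\vee\otimes V\otimes K_C \to L_1^\vee\otimes L_2\otimes K_C\to 0.
\end{equation*}
In the associated long exact sequence the connecting map $H^0(L_1^\vee\otimes L_2\otimes K_C)\to H^1(K_C)$ is cup product with $\xi$, up to sign. So the image of
\begin{equation*}
H^0(L_1^\vee\otimes V\otimes K_C)\to H^0(L_1^\vee\otimes L_2\otimes K_C)
\end{equation*}
is exactly $\ker(\xi\cup(-))$. This gives $\op{im}(c_\xi)\subseteq \ker(\xi\cup(-))$.

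\textbf{Step 3: the first map is surjective.} For the reverse inclusion it suffices to show that restriction $H^0(\End(V)\otimes K_C)\to \op{Hom}(L_1,V\otimes K_C)$ is surjective, and then to deal with the trace condition. The restriction map fits into the sequence obtained by applying $\op{Hom}(-,V\otimes K_C)$ to the extension:
\begin{equation*}
0\to \op{Hom}(L_2,V\otimes K_C)\to \op{Hom}(V,V\otimes K_C)\to\op{Hom}(L_1,V\otimes K_C)\to \op{Ext}^1(L_2,V\otimes K_C).
\end{equation*}
By Serre duality, $\op{Ext}^1(L_2,V\otimes K_C)\cong H^0(L_2\otimes V^\vee)^\vee$. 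Since $\det V = L_1\otimes L_2$, we have $V^\vee\cong V\otimes L_1^\vee\otimes L_2^\vee$, so $L_2\otimes V^\vee\cong L_1^\vee\otimes V$. This space is one-dimensional by hypothesis, so the obstruction space is one-dimensional and the cokernel of restriction has dimension at most $1$.

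I expect the main obstacle to be pinning down this cokernel. The plan is to show that the identity accounts for it. The connecting map sends a morphism $L_1\to V\otimes K_C$ to its pushout of $\xi$. Composing the nonzero section $s\in H^0(L_1^\vee\otimes V)$ (the inclusion $L_1 \to V$) with the residual pairing, the obstruction to extending $f$ becomes a pairing against $s$.

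Rather than computing this pairing, I would use trace-free instead of full endomorphisms and argue via $\End(V)=\OO\oplus\End_0(V)$. The dual of the cokernel of restriction is $\ker\bigl(H^1(L_2^\vee\otimes V\otimes K_C)\to H^1(V^\vee\otimes V\otimes K_C)\bigr)$, which dualizes to the cokernel of $H^0(\End V)\to H^0(L_1^\vee\otimes V)$. By stability $H^0(\End V)=\CC\cdot\op{id}$, and $\op{id}$ maps to the inclusion $s \neq 0$. Because $h^0(L_1^\vee\otimes V)=1$, this map is an isomorphism, so the cokernel vanishes and restriction from $\End(V)\otimes K_C$ is surjective.

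\textbf{Step 4: the trace condition.} Replacing $\psi$ by $\psi-\tfrac12\op{tr}(\psi)\op{id}$ changes $c_\xi(\psi)$ only by the image of $\op{tr}(\psi)\op{id}$, and $\op{id}$ projects to zero in $\op{Hom}(L_1,L_2\otimes K_C)$. So $\op{im}(c_\xi)$ is the same for trace-free and for all endomorphisms, and equals $\ker(\xi\cup(-))$.

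\textbf{Step 5: dimension.} It remains to check that $\xi\cup(-)$ is nonzero onto the one-dimensional space $H^1(K_C)$. Under Serre duality this cup product is the pairing of $\xi\in H^1(L_2^\vee\otimes L_1)$ with $H^0(L_1^\vee\otimes L_2\otimes K_C)$, which is perfect. Since $V$ is stable, the extension is non-split, so $\xi\neq0$ and the functional is surjective. The kernel therefore has codimension one, giving $\dim\op{im}(c_\xi)=h^0(L_1^\vee\otimes L_2\otimes K_C)-1$.
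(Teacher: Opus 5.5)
Your proof is correct. The paper does not prove this lemma itself; it only refers to \cite[equation (2.5)]{dinh_teschner}. Your argument is therefore a self-contained replacement for that citation. Its ingredients are:
\begin{itemize}
\item factoring $c_\xi$ through $\op{Hom}(L_1,V\otimes K_C)$;
\item reading off the image of the second factor as $\ker(\xi\cup(-))$ from the connecting map of $0\to K_C\to L_1^\vee\otimes V\otimes K_C\to L_1^\vee\otimes L_2\otimes K_C\to 0$;
\item proving surjectivity of restriction from Serre duality, simplicity of $V$ and the hypothesis $h^0(L_1^\vee\otimes V)=1$;
\item removing the trace.
\end{itemize}

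The gain over a bare citation is that it shows where each hypothesis enters. Stability is used twice: for $H^0(\End V)=\CC$ and for $\xi\neq 0$. The condition $h^0(L_1^\vee\otimes V)=1$ is used only for surjectivity of restriction.

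Your method also shows this hypothesis is sharp. The image of restriction always contains $\ker\bigl(\op{Hom}(L_1,V\otimes K_C)\to\op{Hom}(L_1,L_2\otimes K_C)\bigr)$, namely the maps $\omega\cdot s$ with $\omega\in H^0(K_C)$, which are restrictions of $\omega\cdot\op{id}_V$. Hence, for any stable $V$, $\op{im}(c_\xi)$ has codimension exactly $h^0(L_1^\vee\otimes V)-1$ in $\ker(\xi\cup(-))$.

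Two cosmetic fixes:
\begin{itemize}
\item In Step 3, the cokernel of restriction \emph{is} $\ker\bigl(\op{Ext}^1(L_2,V\otimes K_C)\to\op{Ext}^1(V,V\otimes K_C)\bigr)$, not its dual. It is this kernel whose dual is the cokernel of $\op{Hom}(V,V)\to\op{Hom}(V,L_2)$, $\psi\mapsto p\circ\psi$, where $p\colon V\to L_2$ is the projection. This map sends $\op{id}$ to $p$, which corresponds to $s$ under $L_2\otimes V^\vee\cong L_1^\vee\otimes V$.
\item The half-finished sentence about the ``residual pairing'' should be deleted.
\end{itemize}
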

	\begin{proof}
		See \cite[equation (2.5)]{dinh_teschner}.
	\end{proof}
	\begin{lemma}\label{lemma:reducible}
		Let $V$ be as in Lemma \ref{lemma:ker_diff}. Then the spectral curve of any $\phi$ in $\ker(c_\xi)$ is not integral. 
	\end{lemma}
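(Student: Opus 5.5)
If $\psi \in \ker(c_\xi)$, then $\psi$ maps $L_1$ into $L_1\otimes K_C$. In other words, $L_1$ is $\psi$-invariant. The plan is to use this invariant line subbundle to split the characteristic polynomial of $\psi$, and then read off reducibility of the spectral curve.

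\textbf{Step 1: the eigenvalues.} Write $\alpha \in H^0(K_C)$ for the induced map $L_1 \to L_1\otimes K_C$. The quotient map $L_2 \to L_2\otimes K_C$ is then multiplication by $\operatorname{tr}(\psi)-\alpha = -\alpha$, since $\psi$ is trace-free. With respect to the filtration $0\subset L_1\subset V$, the twisted endomorphism $\psi$ is therefore upper triangular with diagonal entries $\alpha$ and $-\alpha$. Hence $\det\psi = -\alpha^2$. This identity is checked pointwise on the open set where a local splitting exists, and then holds globally by continuity or because both sides are sections of $K_C^2$.

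\textbf{Step 2: reducibility.} The spectral curve of $\psi$ is $V(\tau^2+\det\psi)$. With the convention $\eta = -\det\psi$, it is $V(\tau^2-\alpha^2) = V(\tau-\alpha)\cup V(\tau+\alpha)\subset \Aff(K_C)$. Each of the two components is the graph of a section of $K_C$, so it maps isomorphically to $C$. The spectral curve is thus a union of two sections. It is not integral: it is reducible if $\alpha\neq 0$, and non-reduced (namely $V(\tau^2)$) if $\alpha=0$.

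\textbf{Main obstacle.} The only point needing care is the sign convention relating $\det\phi$ to $\eta$ in $V(\tau^2-\eta)$. For trace-free $\phi$ the characteristic polynomial is $\tau^2+\det\phi$, so one should match the paper's convention. In either convention $\pm\alpha^2$ gives $\tau^2 \mp \alpha^2$. When the sign makes this $\tau^2+\alpha^2$, it still factors as $(\tau-i\alpha)(\tau+i\alpha)$ over $\CC$. So non-integrality holds regardless of the convention.
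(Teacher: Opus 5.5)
Your proposal is correct and follows the paper's proof: $\psi\in\ker(c_\xi)$ preserves $L_1$, so $\det\psi=-\alpha^2$ where $\alpha\in H^0(K_C)$ is the induced section, and the spectral equation factors as $(\tau-\alpha)(\tau+\alpha)$. Your remarks on the case $\alpha=0$, where the spectral curve is non-reduced, and on the sign convention relating $\det\psi$ to $\eta$ are correct; they only spell out what the paper leaves implicit.
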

	\begin{proof}
		By assumption we have $\phi(L_1) \subseteq L_1\otimes K_C$. Let $\phi_{11}\in H^0(K_C)$ be the induced section. One can then check that $\det \phi = -\phi_{11}^2$ and the defining equation of the spectral curve is $(\tau - \phi_{11})(\tau + \phi_{11 })$.
	\end{proof}
	In section \ref{section:wobbly} we will use the projectivization of the restricted Hitchin map. This is the following rational map:
	\begin{equation*}
		\PP h_V: \PP H^0(\End_0(V)\otimes K_C) \dashrightarrow \PP H^0(K_C^2).
	\end{equation*}
	The indeterminacy locus of $\PP h_V$ consists of all elements $[\phi]$ where $\phi$ is a nilpotent twisted endomorphism. Every such $\phi$ has a one-dimensional kernel $L$ and $\phi$ induces a map $V/L \to L\otimes K_C$. Hence $L$ is a bad line subbundle. Conversely if $L$ is bad, every element of $\op{Hom}(V/L,L\otimes K_C)$ induces a nilpotent twisted endomorphism. Therefore the indeterminacy locus equals:
	\begin{equation}\label{eq:base_locus}
	 \bigcup_{\substack{L\hookrightarrow V: \\
				\text{
					bad line subbundle}}} \PP\left(\op{Hom}(V/L,L\otimes K_C)\right).
	\end{equation}
	We will see in section \ref{section:wobbly} that to resolve the indeterminacy of $\PP h_V$ it does not suffice to blowup this locus with its reduced scheme structure in general.
	\subsection{Hecke modifications}
In the proof of Theorem \ref{thm:A} we will use Hecke modifications of vector bundles. We recall their definition here:
\begin{definition}
	Let $W$ be a vector bundle of rank $2$ on $C$. Then an \textit{Hecke modification} of $W$ at $q \in C$ is an exact sequence of the following form:
	\begin{equation*}
		\begin{tikzcd}
			0 \ar[r]& V´\ar[r]&  W \ar[r]& \OO_q \ar[r]& 0.
		\end{tikzcd}
	\end{equation*}
	The sheaf $V$ is also a vector bundle of rank $2$. 
	The image of $V$ in $W_{|q}$ is generated by $w \in W_{|q}$. We say the Hecke modification is induced by $w$.
\end{definition}
We need the following two facts about Hecke modifications:
	\begin{lemma}\label{lemma:hecke}
		Let $L_1$ and $L_2$ be line bundles on $C$. Assume $V$ is a Hecke modification of $L_1\oplus L_2$ at the point $q \in C$ induced by $w = \begin{pmatrix}
		w_1 \\ w_2
		\end{pmatrix}$. Then the following holds:
		\begin{itemize}
			\item[(i)] If $w_1 = 0$, then $V \cong L_1(-q) \oplus L_2$. 
			\item[(ii)] If $L_1 \cong L_2$, then $V \cong L_1(-q) \oplus L_1$.
		\end{itemize}
	\end{lemma}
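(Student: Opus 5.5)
Both parts can be checked locally at $q$. Away from $q$ the inclusion $V\hookrightarrow W=L_1\oplus L_2$ is an isomorphism, so everything reduces to writing down a subsheaf of $L_1\oplus L_2$ that agrees with $V$ near $q$. Recall that $V$ is the kernel of a surjection $W\to\OO_q$, and this surjection is determined by a linear functional on $W_{|q}$ up to scalar. In the convention of the definition, $V$ is the preimage under $W\to W_{|q}$ of the line $\CC w$. So $V$ consists of the local sections $s$ of $W$ with $s(q)\in\CC w$.

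\emph{Part (i).} Suppose $w_1=0$, so $w=(0,w_2)$ with $w_2\neq 0$. A local section $(s_1,s_2)$ of $L_1\oplus L_2$ lies in $V$ exactly when $s_1(q)=0$, while $s_2$ is unconstrained. Hence $V=L_1(-q)\oplus L_2$ as a subsheaf of $L_1\oplus L_2$, and this identification is global, not just local.

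\emph{Part (ii).} Fix an isomorphism $L_2\cong L_1$, so that $W\cong L_1\otimes \CC^2=L_1\oplus L_1$. The group $\op{GL}_2(\CC)$ then acts on $W$ by constant automorphisms $A\otimes\op{id}_{L_1}$. Such an automorphism sends the Hecke modification induced by $w$ isomorphically onto the one induced by $Aw$. Choose $A$ with $Aw=(0,1)^T$; this is possible because $w\neq0$ (the image of $V$ in $W_{|q}$ is a line). Part (i) then gives $V\cong L_1(-q)\oplus L_1$.

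Neither step is a real obstacle. The only care needed is in part (ii): the constant matrix must act compatibly on all of $W$, which requires identifying $L_2$ with $L_1$ globally. A merely fiberwise identification at $q$ would not suffice.
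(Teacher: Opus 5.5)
Your proof is correct and matches the paper's argument. Part (i) is the observation that the sections of $L_1\oplus L_2$ whose value at $q$ lies in $\langle w\rangle$ are exactly those of $L_1(-q)\oplus L_2$, and part (ii) reduces to (i) via the transitive action of $\op{GL}_2(\CC)=\op{Aut}(L_1^{\oplus 2})$ on nonzero vectors of the fiber at $q$; your remark that $L_2\cong L_1$ must be identified globally is exactly what makes the constant matrices genuine bundle automorphisms.
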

	\begin{proof}
		(i) holds because the kernel of $L_1 \twoheadrightarrow \OO_q$ is $L_1(-q)$. For (ii) note that the automorphism group of $L_1\oplus L_2$ is $\op{GL}_2(\CC)$ and it acts transitively on the fiber of $L_1^{\oplus 2}$ at $q$.
		Therefore, every Hecke modification of $L_1^{\oplus 2}$ at $q$ is isomorphic to $L_1(-q)\oplus L_1$.
	\end{proof}
	\section{Higgs bundles with smooth spectral curves}\label{section:smooth}
	In this section we prove that every stable vector bundle $V$ of rank $2$ admits a smooth spectral curve. This result and its proof are inspired by \cite[Proposition 3.4]{hitchin} where all vector bundles of rank $2$ with an integral spectral curve are determined. The technical part of the proof of Theorem \ref{thm:A} is the analysis of the evaluation map 
	\begin{equation*}
		\op{ev}_q: 	H^0(\End_0(V)\otimes K_C) \to H^0(\End_0(V)\otimes K_C \otimes \OO_q).
	\end{equation*}
	at points $q$ of $C$. We will be in the following situation: Take $\phi_0 \in H^0(\End_0\otimes K_{C|q})$ and $\rho \in H^0(\End_0(V)\otimes \OO_C(q))$. Then the composition $\phi_0\rho$ is in the fiber of $\End_0(V)\otimes K_C(q)$ at $q$ and $\op{tr}(\phi_0\rho)$ is in $H^0(K_C(q)_{|q})$. This element has a well defined residue $\op{res}_q \op{tr}(\phi_0\rho)$.  
	\begin{lemma}\label{lemma:local_twisted}
		Let $V$ be a stable rank $2$ vector bundle on $C$ and $q$ be a point of $C$. 
		Then the image of $\op{ev}_q$ is as follows: 
		\begin{equation}\label{eq:residue_van}
			\op{im}(\op{ev}_q) = \left\{\phi_0 \in H^0(\End_0(V) \otimes K_C) \;\middle|\;
			\begin{array}{l}
				\op{res}_q\op{tr}(\phi_0 \rho)  = 0 \\
				\text{for all $\rho \in H^0(\End_0(V)\otimes \OO_C(q))$}
			\end{array}\right\}.
		\end{equation}.
	\end{lemma}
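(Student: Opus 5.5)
The plan is to read the image of $\op{ev}_q$ off the long exact cohomology sequence of
\begin{equation*}
0 \to \End_0(V)\otimes K_C \to \End_0(V)\otimes K_C(q) \to \End_0(V)\otimes K_C(q)_{|q} \to 0,
\end{equation*}
and then dualize it with Serre duality. I will in fact use the twisted-by-$q$ version of evaluation, and afterwards identify $H^0(\End_0(V)\otimes K_C\otimes\OO_q)$ with the fibre at $q$ in the usual way.

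A cleaner route is to consider the sequence $0\to \End_0(V)\otimes K_C(-q)\to \End_0(V)\otimes K_C \to \End_0(V)\otimes K_C\otimes \OO_q\to 0$. Its long exact sequence gives
\begin{equation*}
H^0(\End_0(V)\otimes K_C)\xrightarrow{\op{ev}_q} \End_0(V)\otimes K_{C|q}\xrightarrow{\delta} H^1(\End_0(V)\otimes K_C(-q)) \to H^1(\End_0(V)\otimes K_C).
\end{equation*}
So $\op{im}(\op{ev}_q)=\ker\delta$. By Serre duality, using the trace pairing and the self-duality $\End_0(V)^\vee\cong\End_0(V)$, we have $H^1(\End_0(V)\otimes K_C(-q))\cong H^0(\End_0(V)\otimes\OO_C(q))^\vee$. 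Hence $\phi_0\in\ker\delta$ if and only if $\langle \delta(\phi_0),\rho\rangle=0$ for all $\rho\in H^0(\End_0(V)\otimes\OO_C(q))$.

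The key step is to identify this pairing with $\op{res}_q\op{tr}(\phi_0\rho)$. I would do this with Čech or Dolbeault/adelic representatives. Lift $\phi_0$ to a local section $\tilde\phi$ of $\End_0(V)\otimes K_C$ near $q$. Then $\delta(\phi_0)$ is represented by the cocycle $\tilde\phi$ on $U_q\setminus\{q\}$, viewed in $\End_0(V)\otimes K_C(-q)$ via the cover $\{U_q, C\setminus q\}$, where the section $0$ is taken on $C\setminus q$. The Serre duality pairing with $\rho$ is the sum of residues of $\op{tr}(\tilde\phi\rho)$ at $q$. Since $\rho$ has at most a simple pole there, this residue depends only on $\tilde\phi(q)=\phi_0$, and it equals $\op{res}_q\op{tr}(\phi_0\rho)$ as defined before the lemma. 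Getting this sign- and normalization-compatible description of the connecting map under Serre duality is the main technical point. I expect it to be standard, as in the proof that the residue pairing realizes duality for $\OO_q$-quotients.

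Stability of $V$ is used only mildly. It ensures that $H^0(\End_0(V))=0$, so that $H^0(\End_0(V)\otimes\OO_C(q))$ consists of genuinely polar sections. As a dimension check, $H^1(\End_0(V)\otimes K_C)\cong H^0(\End_0(V))^\vee=0$. Therefore $\delta$ is surjective and $\op{codim}\,\op{im}(\op{ev}_q)=h^0(\End_0(V)(q))$, which is consistent with the residue description. This consistency confirms that no further conditions are needed, and the equality \eqref{eq:residue_van} follows.
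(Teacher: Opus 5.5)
Your proposal is correct and follows essentially the same route as the paper: the sequence $0\to\End_0(V)\otimes K_C(-q)\to\End_0(V)\otimes K_C\to\End_0(V)\otimes K_{C|q}\to 0$, the identification $\op{im}(\op{ev}_q)=\ker\delta$, Serre duality with $H^0(\End_0(V)\otimes\OO_C(q))$, and a residue computation of the pairing on a local lift of $\phi_0$. The only difference is cosmetic: you compute with the \v{C}ech cover $\{U_q, C\setminus q\}$ where the paper uses the formal-neighbourhood (adelic) complex, and your opening detour through the $K_C(q)$-twisted sequence can simply be dropped.
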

	\begin{proof}
		Consider the short exact sequence:
		\begin{equation*}
			0 \to \End_0(V)\otimes K_C(-q) \to \End_0(V) \otimes K_C\to \End_0(V)\otimes K_{C|q} \to 0.
		\end{equation*}
		Its long exact sequence in cohomology is:
		\begin{equation*}
			\begin{tikzcd}
				0 \ar[r] &H^0(\End_0(V)\otimes K_C(-q)) \ar[r] &  H^0(\End_0(V)\otimes K_C) \ar[r,"\op{ev}_q"] & {} \\
				{} \ar[r]& H^0(\End_0(V)\otimes K_{C|q}) \ar[r,"\delta"] & H^1(\End_0(V)\otimes K_C(-q)) \ar[r]& 0
			\end{tikzcd}
		\end{equation*}
		Here we used the stability of $V$ and Serre duality to conclude that $h^0(\End_0(V)) = h^1(\End_0(V)\otimes K_C) = 0$.
		From the exact sequence we deduce $\op{im}(\op{ev}_q) = \ker(\delta)$. By Serre duality there is a nondegenerate pairing $\langle-,-\rangle$ between $H^1(\End_0(V)\otimes K_C(-q)))$ and $H^0(\End_0(V)\otimes \OO_C(q))$. The cohomology class $\delta(\phi_0)$ is zero if and only if $\langle \delta(\phi_0),\rho\rangle = 0$ for all $\rho \in H^0(\End_0(V)\otimes \OO_C(q))$. To make this pairing explicit we use that the cohomology of a coherent sheaf $\cF$ on $C$ can be computed with the following complex: 
		\begin{equation*}
			\Gamma(C-p,\cF) \oplus \Gamma(\op{Spec}\hat{\OO}_{C,q},\cF) \to \Gamma(\op{Spec}\op{Frac}(\hat{\OO}_{C,q}),\cF).
		\end{equation*}
		 Let $\psi$ be in $\Gamma(\op{Spec}\op{Frac}(\hat{\OO}_{C,q}),\End_0(V)\otimes K_C(-q))$ and write $[\psi]$ for the corresponding cohomology class.  Take $\rho \in H^0(\End_0(V)\otimes \OO_C(q))$. The duality pairing is given by 
		\begin{equation*}
			\langle \rho,[\psi]\rangle =  \op{res}_q \op{tr}(\rho\psi).
		\end{equation*}
		If $\phi_0$ is in $H^0(\End_0(V)\otimes K_{C}\otimes \OO_q)$, we can represent $\delta(\phi_0)$ by any lift of $\phi_0$ to a section $\phi$ of $\Gamma(\op{Spec}\op{Frac}(\hat{\OO}_{C,q}),\End_0(V)\otimes K_C(-q))$. Then we have 
		\begin{equation*}
			\langle \rho,\delta(\phi_0)\rangle = \op{res}_q(\rho\phi) = \op{res}_q(\rho\phi_0).
		\end{equation*}
		Hence, $\phi_0$ is in $\op{im}(\op{ev}_q)$ if and only if the right hand side vanishes for all $\rho \in H^0(\End_0(V)\otimes \OO_C(q))$.
	\end{proof}
	\begin{theorem}
		Let $C$ be a smooth projective curve of genus $g \geq 2$ that is not hyperelliptic and take $V$ to be a stable vector bundle of rank $2$ on $C$. Then there exists a twisted endomorphism $\phi \in H^0(\End_0(V)\otimes K_C)$ whose spectral curve is smooth.
	\end{theorem}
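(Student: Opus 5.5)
The spectral curve $\tilde C = V(\tau^2-\eta)$ is smooth exactly when $\eta=\det\phi$ has only simple zeros. So the goal is to find $\phi\in H^0(\End_0(V)\otimes K_C)$ with $\det\phi$ having simple zeros, and the natural tool is Bertini. Consider the linear system of quadratic differentials $\{\det\phi\}$. This is not linear in $\phi$, but it is quadratic. The plan is to show that a general $\phi$ yields $\det\phi$ with only simple zeros by working locally at each point $q\in C$ through $\op{ev}_q$. Since $\dim C = 1$, it suffices to show that for each $q$ the set of $\phi$ for which $\det\phi$ vanishes to order $\ge 2$ at $q$ has codimension $\ge 2$ in $H^0(\End_0(V)\otimes K_C)$. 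A one-parameter family of bad points then cannot fill up the whole space, and a general $\phi$ works.

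The first step is to understand the image of $\op{ev}_q$ via Lemma \ref{lemma:local_twisted}. By Riemann--Roch and stability, $h^0(\End_0(V)\otimes\OO_C(q))\le 3\cdot 1 - 3(g-1) + h^1$. More usefully, the image of $\op{ev}_q$ is a subspace of the $3$-dimensional fiber $\End_0(V)\otimes K_{C|q}$. Its codimension equals $h^0(\End_0(V)(q))$, because $\delta$ is surjective onto $H^1(\End_0(V)\otimes K_C(-q))$, which is dual to $H^0(\End_0(V)(q))$. I would show $h^0(\End_0(V)(q))\le 1$ for $C$ non-hyperelliptic. A section $\rho$ of $\End_0(V)(q)$ is a trace-free meromorphic endomorphism with at most a simple pole at $q$. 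Its determinant lies in $H^0(\OO_C(2q))$, and since $C$ is not hyperelliptic, $h^0(\OO(2q))=1$, so $\det\rho$ is constant. If two independent such $\rho$ existed, a combination would have eigenvalues forcing a splitting or a nilpotent pole. In either case one gets a map $L\to V$ contradicting stability, or else $\rho$ is holomorphic, which gives $\rho=0$. This is where non-hyperellipticity enters. Consequently, the image of $\op{ev}_q$ is either the whole fiber or the hyperplane $\{\phi_0 : \op{tr}(\phi_0\rho_0)=0\}$ for the residue $\rho_0$ of $\rho$.

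The second step is the local computation. Write $\det\phi = -\frac12\op{tr}(\phi^2)$. The condition $\det\phi(q)=0$ cuts out the nilpotent cone in the image of $\op{ev}_q$. The condition that the zero be double adds one more condition involving the first-order jet: if $\phi(q)=\phi_0\neq0$ is nilpotent, then $d(\det\phi)(q)=\op{tr}(\phi_0\phi'(q))$ up to sign. So the bad set is contained in the union of two pieces. The first is $\{\phi : \phi(q)=0\}$, which has codimension $\ge 2$ since the image of $\op{ev}_q$ has dimension $\ge 2$. The second is the set of $\phi$ with $\phi(q)$ nilpotent and nonzero, subject to the extra first-order condition. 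I would show this extra condition is nontrivial, i.e. independent of the nilpotency condition. To do so, apply the same residue argument to the evaluation on $2q$-jets, using the exact sequence with $\End_0(V)\otimes K_C(-2q)$ and $h^0(\End_0(V)(2q))$, which is controlled because $h^0(\OO(2q))=1$. Alternatively, use the formula analogous to \eqref{eq:diff_nilpotent}.

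The main obstacle is the degenerate case where the image of $\op{ev}_q$ is the hyperplane $\rho_0^\perp$. If $\rho_0$ is nilpotent, this hyperplane meets the nilpotent cone in a double line, so every nilpotent value there lies on a single line $\ell$. Counting gives codimension $1$ for $\{\phi(q)\in\ell\}$, and then the first-order condition must genuinely cut down by one more. To handle this, I would identify $\rho$ with a Hecke modification: a nilpotent polar part means $V$ is a Hecke modification at $q$ of some $W$ with $\rho\in H^0(\End_0 W)$. I would then use Lemma \ref{lemma:hecke} to show that such $W$ would be $L_1\oplus L_2$ or $L_1^{\oplus2}$, contradicting the stability of $V$ or forcing the first-order condition to be independent. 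If this fails at finitely many $q$, those points add no dimension and the Bertini-type count still goes through.
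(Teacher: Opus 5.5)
Your reduction is sound: $\tilde C$ is smooth iff $\det\phi$ has only simple zeros. It therefore suffices that $B_q=\{\phi : \op{ord}_q\det\phi\ge 2\}$ has codimension $\ge 2$ for all but finitely many $q$, and is a proper subset at the remaining ones.

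\textbf{Main gap: the first-order condition at general $q$.} You need that $\op{tr}(\phi(q)\phi'(q))=0$ cuts one more dimension out of $\{\det\phi(q)=0\}$, and this is not proved. When $\op{ev}_q$ is surjective, that locus is the preimage of the nilpotent cone, hence an irreducible quadric cone. So what you need is a single $\phi$ whose determinant has a simple zero at $q$, and nothing in the proposal produces one. The mechanism you suggest does not work. A section of $\End_0(V)\otimes\OO_C(2q)$ has determinant in $H^0(\OO_C(4q))$, not $H^0(\OO_C(2q))$, and non-hyperellipticity gives no control there: on every curve of genus $3$, $h^0(\OO_C(4q))\geq 2$ by Riemann--Roch. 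Even a bound on $h^0(\End_0(V)\otimes\OO_C(2q))$ would not suffice. You would still have to exclude a section with a double pole whose leading term is nilpotent with kernel the relevant eigenline. The formula for $d_\phi h_V$ at a global nilpotent $\phi$ says nothing about jets at a point.

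\textbf{The missing idea: the paper's linearization.} The paper uses $H^0(\End_0(V)\otimes K_C)\cong H^0(\PP(V),\OO(2)\otimes\pi^*(\det V\otimes K_C))$. Here $\phi$ corresponds to the section $[v]\mapsto\phi(v)\wedge v$, whose zero locus (the eigenlines) is the spectral curve. This is \emph{linear} in $\phi$, so Bertini on the surface $\PP(V)$ makes a general member smooth away from the base locus. That disposes of your entire first-order analysis, and only the base points need work. They lie over the $q$ with $H^0(\End_0(V)\otimes\OO_C(q))\neq 0$. There the image of $\op{ev}_q$ is the Borel subalgebra preserving $\ker\rho_{|q}$, and a semisimple element of it gives a member smooth at the base point. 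Note that $\rho_{|q}$ is always nilpotent, since $\det\rho\in H^0(\OO_C(2q))=\CC$ is constant. So your double-line situation is the only degenerate one.

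\textbf{Second gap: finiteness of the exceptional points.} Your fallback presupposes that these $q$ are finitely many, but you never prove it. Stability cannot exclude them: a nonsplit extension $0\to L\to V\to L(q)\to 0$ is stable and carries such a $\rho$. The paper gets finiteness by showing that each such $q$ produces a maximal line subbundle of degree $(\deg V-1)/2$ which determines $q$. It then invokes Lange--Narasimhan's finiteness of maximal subbundles.

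\textbf{Smaller point.} Your argument for $h^0(\End_0(V)\otimes\OO_C(q))\le 1$ is only a sketch. A nilpotent element in a two-dimensional span does not by itself contradict stability. As in the paper, one needs non-hyperellipticity and the Hecke lemma to force all such elements to share a kernel and hence be proportional.
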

	\begin{proof}
	 We recall the interpretation of $H^0(\End_0(V)\otimes K_C)$ as a linear system on $\PP(V)$ from \cite{hitchin}. Let $\pi : \PP (V) \to C$ be the projection. Then we have
		\begin{equation*}
			\End_0(V)\otimes K_C  \cong \pi_*(\OO(2) \otimes \pi^*(\det V\otimes K_C)).
		\end{equation*} 
		This gives an isomorphism $\tau: H^0(C,\End_0(V)\otimes K_C) \to H^0(\PP (V  ),\OO(2)\otimes \pi^*(\det V\otimes K_C))$. For twisted endomorphism $\phi$ the zero locus of $\tau(\phi)$ consists $[v] \in \PP(V_{|q})$, where $q\in C$, which satisfy 
		\begin{equation*}
			\phi(v) \wedge v = 0.
		\end{equation*}  
		The last equation holds in $(\det V\otimes K_C)_{|q}$. In other words: $v$ is an eigenvector of $\phi$.  
		
		 We will show that $\OO(2)\otimes \pi^*(\det V\otimes K_C)$ has only finitely many base points. Furthermore, for each base point we will find a section $s \in H^0(\OO(2)\otimes \pi^*(\det V\otimes K_C))$ such that $V(s)$ is smooth at this point. Then, Bertini's theorem \cite[Theorem 6.3]{jouanolou} implies that there is $\phi \in H^0(\End_0(V)\otimes K_C)$ such that $V(\tau(\phi))$ is smooth. Recall that in this case $V(\tau(\phi))$ is isomorphic to the spectral curve $\tilde{C}$. Hence, the Theorem follows.
		
		To understand the base points of the linear system we determine the image of the evaluation maps $\op{ev}_q$ for all $q\in C$. We use the description of $\op{im} (\op{ev}_q)$ from Lemma \ref{lemma:local_twisted}. If $H^0(\End_0(V)\otimes \OO_C(q))$ vanished for all $q \in C$, then all evaluation maps would be surjective.  Hence, $\End_0(V)\otimes K_C$ would be globally generated and the same would hold for $\OO(2)\otimes \pi^*(K_C\otimes \det V)$. Therefore, we assume from now on  that there is $q \in C$ such that a nonzero $\rho \in H^0(\End_0(V)\otimes \OO_C(q))$ exists. We distinguish whether $\rho$ is nilpotent or not. 
		
		\textit{Case: $\rho$ is not nilpotent.x} Consider the following variant of the restricted Hitchin map:
		\begin{equation*}
			H^0(\End_0(V)\otimes \OO_C(q)) \to H^0(\OO_C(2q)),\quad \phi \mapsto \det \phi.
		\end{equation*}
		Because $C$ is not hyperelliptic, we have $h^0(\OO_C(2q)) = 1$. Therefore $\det \phi \in \CC^\times$. Hence, we can factor the characteristic polynomial of $\rho$ as $\chi_\rho(T) = (T-\mu)(T+\mu)$ with $\mu \in \CC^\times$. Define $L_\pm = \ker(\rho-\pm \mu \iota)$ where $\iota$ is the natural inclusion $V\hookrightarrow V(q)$. These kernels are line subbundles. By stability we have $\deg L_\pm < \frac{d}{2}$. On the other hand we can use the stability of $V(q)$:
		\begin{equation*}
			\deg L_\pm = d - \deg \op{im}(\rho -\pm \mu\iota) > d - \frac{d+2}{2} = \frac{d}{2} -1.
		\end{equation*} 
		This implies that $d = 2k+1$ and $\deg L_\pm = k$ for a certain $k$. Note that $(\rho - \mu\iota)(\rho + \mu\iota) = 0$. Hence,
		\begin{equation*}
			\op{im}(\rho \pm \mu\iota) = L_\pm(q).
		\end{equation*}
		 Because the line subbundles $L_\pm$ are distinct we obtain two inclusions:
		\begin{equation*}
			L_+ \oplus L_- \xhookrightarrow{\begin{pmatrix}
					i_{L_+}\\ i_{L_-}
			\end{pmatrix}} V \xhookrightarrow{(\rho+\mu\iota,\rho - \mu\iota)}  L_+(q)\oplus L_-(q).
		\end{equation*}
		 In other words, $V$ is a Hecke modification of $L_+(q) \oplus L_-(q)$. The Hecke modification is induced by some $w = \begin{pmatrix}
			w_+ \\ w_-
		\end{pmatrix} \in\left(L_+(q)\oplus L_-(q)\right)_{|q}$. This means a section $s$ of $L_+(q)\oplus L_-(q)$ is a section of $V$ if and only if $s_{|q} \in \langle w\rangle$. The stability of $V$ and Lemma \ref{lemma:hecke} (i) imply that $w_+$ and $w_-$ are both nonzero and $L_+ \ncong L_-$.
		
		 Now we show that $\rho$ generates $H^0(\End_0(V) \otimes \OO_C(q))$. Let us abbreviate $L_+\oplus L_-$ by $W$. By construction $\rho$ is the restriction of the following endomorphism:
		\begin{equation}\label{eq:matrix}
			\begin{pmatrix}
				\mu & 0\\
				0 & -\mu
			\end{pmatrix}: W(q) \to W(q).
		\end{equation}
		From the following diagram one deduces that every $\OO_C(q)$-twisted endomorphism of $V$ induces an $\OO_C(2q)$-twisted endomorphism of $W = L_+\oplus L_-$: 
		\begin{equation*}
			\begin{tikzcd}
				W \arrow[r] & V \arrow[d]& \\
				& V(q) \arrow[r] & W(2q).
			\end{tikzcd}
		\end{equation*}
		This gives an inclusion $H^0(\End_0(V)\otimes \OO_C(q))\hookrightarrow H^0(\End_0(V)\otimes \OO_C(2q))$.
		Now let $\rho'$ be an element of $H^0(\End_0(W)\otimes \OO_C(q))$ induced by an element of $H^0(\End_0(V)\otimes \OO_C(q))$. We will show that $\rho'\otimes \op{id}_{\OO_C(q)}$ is a multiple of the morphism in (\ref{eq:matrix}). We write 
		\begin{equation*}
			\rho' = \begin{pmatrix}
				\rho_{11}' & \rho_{12}' \\
				\rho_{21}' & -\rho_{11}'
			\end{pmatrix}.
		\end{equation*}
		with $\rho_{11}' \in H^0(\OO_C(2q)), \rho_{12}' \in H^0(L_-^\vee\otimes L_+(2q))$ and $\rho_{21}' \in H^0(L_+^\vee\otimes L_-(2q))$. Note that $\rho'_{11|q} = 0$ because $H^0(\OO_C(2q)) \cong H^0(\OO_C(q))$. Here we use that $C$ is not hyperelliptic. Furthermore, we have $0 = \det \rho_{|q}' = \rho'_{12|q}\rho'_{21|q}$. Therefore, at least one of $\rho'_{12|q}$ or $\rho'_{21|q}$ is zero. We observe that $w \in \ker \rho'_{|q}$:
		\begin{equation*}
			\rho'(w) \in V(q)_{|q} \Rightarrow \rho_{|q}'(w) = 0 \in W(3q)_{|q}.
		\end{equation*}
		 If only $\rho_{12|q}'$ was  zero, $w$ would not be in the kernel of $\rho'_{|q}$ because $w_+ \neq 0$. The same holds for $\rho'_{21}$. We have shown $\rho'_{|q} = 0$. Therefore our twisted endomorphism in $H^0(\End_0(V) \otimes \OO_C(q))$ is the restriction of
		 \begin{equation*}
		 	\rho'' := \rho'\otimes \op{id}_{\OO_C(q)}: W(q) \to W(2q).
		 \end{equation*}
		 We observe that $w$ needs to be an eigenvector of $\rho''_{|q}$.  From $w_+ \neq 0 $ and $w_-\neq 0$ we deduce that $\rho''_{21|q}$ is nonzero if and only if $\rho''_{12|q}$ is nonzero. Assume for contradiction that $\rho''_{21|q}$ and $\rho''_{12|q}$ are both not zero. Then $L_+^\vee\otimes L_-(q)$ and $L_-^\vee\otimes L_+(q)$ are both effective. In other words we find $p_1,p_2\in C$ such that 
		\begin{equation*}
			L_+^\vee\otimes L_-(q) \cong \OO(p_1),\quad L_-^\vee\otimes L_+(q) \cong \OO(p_2).
		\end{equation*}    
		Take the tensor product of both equations to get $\OO(p_1+p_2) \cong \OO(2q)$. Because $C$ is not hyperelliptic, we have $q = p_1 = p_2$ and $L_+ \cong L_-$. Then Lemma \ref{lemma:hecke} (ii) shows that $V$ is unstable. This is a contradiction. Therefore, $\rho''_{12|q} = \rho''_{21|q} = 0$. Another application of $L_+ \ncong L_-$ shows $\rho''_{12} = \rho''_{21} = 0$. It follows that $\rho'$ is equal to a multiple of $\begin{pmatrix}
			\mu & 0 \\
			0 & -\mu
		\end{pmatrix}$. We have shown $H^0(\End_0(V) \otimes \OO_C(q)) = \langle \rho \rangle$.
		
		\textit{Case: $\rho$ is nilpotent.} Let $L$ be the kernel of $\rho$. By the same argument we used to compute the degree of $L_+$, we find $d = 2k+1$ and $\deg L = k$ for some $k$. There is a nonzero map $V/L \to L(q)$. From $\deg V/L =L(q)$, we deduce $V/L \cong L(q)$. The vector bundle $V$ is an extension of $L(q)$ by $L$. We are going to show that $H^0(\End_0(V) \otimes \OO_C(q))$ is generated by $\rho$. Take $\rho' \in H^0(\End_0(V) \otimes \OO_C(q)) $. Let $M = \ker \rho'$. Then $V$ is an extension of $M(q)$ by $M$. 
		
		We show that $M \cong L$. Assume for contradiction that $M$ is not isomorphic to $L$. 
		We get a nonzero map $M \to V \to L(q)$. Therefore, $M$ is isomorphic to $L(p-q)$ for some $p \in C$. On the other hand we have $L^2(q) \cong \det V \cong M^2(q)$.
		We deduce $\OO(2p-2q)\cong \OO_C$. Because $C$ is not hyperelliptic we find $q = p$ and $M \cong L$. This is a contradiction and the line bundles are isomorphic. If $L \to V$ and $M \to V$ were different subbundles, $V$ would be a Hecke modification of $L(q)^{\oplus 2}$. Hence, $\ker \rho = \ker \rho'$ and $\rho'$ is a multiple of $\rho$.
		
		Now we can show that there are only finitely many $q$ such that $H^0(\End_0(V)\otimes \OO_C(q)) \neq 0$ holds. Assume that there exists one such point $q$. Then we have seen that $V$ is
	 an extension of a line bundle of degree $k+1$ by a line bundle of degree $k$. By stability $L$ is a maximal line subbundle. It is shown in \cite[Proposition 4.2]{langenarasimhan} that in this case $V$ has only finitely many maximal subbundles. Now if $H^0(\End_0(V) \otimes K_C)$ contains $\rho$ which is not nilpotent we can recover $q$ from the maximal line subbundles $L_+$ and $L_-$: The determinant of $V$ equals $L_+\otimes L_-(q)$. Similarly, if $\rho$ is nilpotent, then we can recover $q$ from $L = \ker \rho$: $\det V = L^2(q)$. Hence, the sheaf $\End_0(V)$ has only finitely many points at which it is not globally generated.
				
		Let us determine the base points of $\OO(2) \otimes \pi^*(\det (V) \otimes K_C)$. First observe that all base points need to lie in $\pi^{-1}(q)$ for a $q$ as above. If $\rho \in H^0(\End_0(V)\otimes \OO_C(q))$ is not nilpotent, then there are no base points: Take $[v] \in \pi^{-1}(q)$. We can find $f \in H^0(\End_0(V) \otimes K_C\otimes \OO_q)$ such that
		\begin{equation*}
			\op{res}_q \op{tr}(f\rho) = 0,\quad f(v)\wedge v\neq 0.
		\end{equation*} 
		Hence, there is $\phi \in H^0(\End_0(V)\otimes K_C)$ such that $\phi_{|q}(v)\wedge v \neq 0$. We can now apply Bertini's theorem to deduce the Theorem.
		  
		If $\rho$ is nilpotent, the fiber $L_{|q}$ is a base point: With the help of Lemma \ref{lemma:local_twisted} we see that the image of 
		\begin{equation*}
			H^0(\End_0(V) \otimes K_C) \to H^0(\End_0(V)\otimes K_C\otimes \OO_q)
		\end{equation*}
		consists of all $f: V_{|q} \to V(q)_{|q}$ which have $L_{|q}$ as an eigenspace. In particular there exists a $\phi: V\to V\otimes K_C$ such that $\phi_{|q}$ has two different eigenspaces. This implies that $V(\tau(\phi))$ is smooth at the point $L_{|q} \in \PP(V)$. Again we can apply Bertini's theorem.
	\end{proof} 
	\section{Unique bad line subbundles}\label{section:unique_bad}
	The goal of this section is to prove Theorem \ref{thm:B} by dimension counting. Consider a line bundle $L$ of degree $s$ and let $t \in\NN$. The technical part of the proof consists of finding an upper bound for the dimension of the following locus of divisors:
	\begin{equation*}
		\cA_{L,t} := \left\{ D \in C_t \mid h^0(K_C \otimes L(-2D)) > 0\right\}.
	\end{equation*}
	Before we can bound the dimension $\dim \cA_{L,t}$ we need to prove two lemmas:
	\begin{lemma}\label{lemma:bad_i}
		Let $M$ be a line bundle on $C$ and $e < \deg M$.  Then, we have:
		\begin{equation*}
			\dim \left\{ \OO(E) \in W^0_e(C) \mid h^0(M(-E)) > 0 \right\} \leq h^0(M) -1.
		\end{equation*}
	\end{lemma}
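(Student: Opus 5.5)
We parametrize the locus by effective divisors and count dimensions with an incidence correspondence. Set
\begin{equation*}
	\cB := \left\{ (E, F) \in C_e \times C_{m-e} \mid \OO(E+F) \cong M \right\},
\end{equation*}
where $m = \deg M$. Since $e < m$, the degree $m-e$ is positive. If $\OO(E)$ lies in the locus of the statement, then $h^0(M(-E))>0$, so there is an effective divisor $F$ of degree $m-e$ with $\OO(E+F) \cong M$. Hence the locus is contained in the image of $\cB$ under $(E,F)\mapsto \OO(E)$, and it suffices to bound $\dim \cB$.

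On the other hand, $\cB$ is identified with a closed subset of $|M| \cong \PP H^0(M)$ via the addition map
\begin{equation*}
	\cB \to |M|, \quad (E,F) \mapsto E+F.
\end{equation*}
This map is finite, because an effective divisor of degree $m$ can be split as $E+F$ with $\deg E = e$ in only finitely many ways. Hence
\begin{equation*}
	\dim \cB \leq \dim |M| = h^0(M)-1.
\end{equation*}

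Finally, the image of $\cB$ under $(E,F) \mapsto \OO(E) \in \Pic^e(C)$ has dimension at most $\dim \cB$, and it contains the locus in question. This gives the bound
\begin{equation*}
	\dim \left\{ \OO(E) \in W^0_e(C) \mid h^0(M(-E)) > 0 \right\} \leq h^0(M)-1.
\end{equation*}
If $h^0(M)=0$, both $\cB$ and the locus are empty, which is consistent with the convention $\dim \emptyset = -1$. The hypothesis $e<\deg M$ is not actually used for the bound; it only ensures that the situation is nontrivial.

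The main point to check is the finiteness of the addition map $C_e \times C_{m-e} \to C_m$. This holds because a divisor $D = \sum n_i p_i$ has only finitely many effective subdivisors. In particular, the proof does not need the map $C_e \to W^0_e(C)$ to have small fibers: we only take images, and images can only decrease dimension.
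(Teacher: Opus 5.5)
Your proposal is correct and is essentially the paper's proof. The paper uses the incidence correspondence $\{(E,F)\in C_e\times |M| \mid E\le F\}$, which is your $\cB$ with the complementary divisor replaced by the sum $E+F$. It bounds its dimension via the finite projection to $|M|$ and then takes the image in $\Pic^e(C)$. One small wording slip: the addition map does not identify $\cB$ with a closed subset of $|M|$, since it need not be injective, but it is finite, and finiteness is all your dimension bound uses.
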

	\begin{proof}
		We consider the following incidence correspondence:
		\begin{equation*}
			\begin{tikzcd}
			 	& {\{ (E,F) \in C_e \times |M| \mid E\leq F \} }\ar[dr,"{(E,F) \mapsto F}"] \ar[dl,"{(E,F) \mapsto \OO(E)}"] &  \\
				\op{Pic}^e(C)& &{|M|}
			\end{tikzcd}
		\end{equation*}
		The map going to the right is finite. Hence, the dimension of the incidence correspondence is $\leq \dim |M| = h^0(M) -1$. The image of the map going to the left is $\left\{ \OO(E) \in W^0_e(C) \mid h^0(M(-E)) > 0 \right\}$ and the dimension estimate follows.
	\end{proof}
	We are also going to need this auxiliary stratification of the symmetric product $C_t$: 
	\begin{equation*}
		Q_t^k = \left\{D \in C_t \mid h^0(K_C^2(-2D)) = 3(g-1) -2t +k\right\}.
	\end{equation*}
	for $t \leq 2g-2$ and $k \in \NN$.
	\begin{lemma}\label{lemma:bad_ii}
		Assume $t \leq \frac{3}{2}(g-1)$. If $1 \leq k \leq t+2 -g$, then the dimension of $Q^k_{t}$ is at most $t-k$. If $ k > t +2 -g$, then $Q^k_{t}$ is empty.
	\end{lemma}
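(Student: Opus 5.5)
The plan is to translate membership in $Q^k_t$ into a condition on the special line bundle $\OO_C(2D)\otimes K_C^\vee$, and then to count dimensions of divisors $D$ satisfying it.

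\textbf{Step 1 (reformulation).} For $D \in C_t$ the line bundle $K_C^2(-2D)$ has degree $4g-4-2t$. Riemann--Roch and Serre duality give
\begin{equation*}
h^0(K_C^2(-2D)) = 3(g-1) - 2t + h^0(K_C^\vee(2D)).
\end{equation*}
Hence $D \in Q^k_t$ if and only if $N_D := K_C^\vee(2D)$ satisfies $h^0(N_D) = k$. Note that $\deg N_D = e := 2t-2g+2$. The assumption $t \leq \frac32(g-1)$ gives $e \leq g-1$.

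\textbf{Step 2 (emptiness).} Suppose $k \geq 1$. Then $N_D$ is effective of degree $e \leq g-1$. It is also special, since $h^1(N_D) \geq g-1-e \geq 0$, with equality excluded once $h^0>0$ by Riemann--Roch. Clifford's theorem therefore applies and gives $k = h^0(N_D) \leq \frac e2 + 1 = t+2-g$. So $Q^k_t = \emptyset$ for $k > t+2-g$.

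\textbf{Step 3 (dimension bound).} Stratify $Q^k_t$ further by $r := h^0(\OO(D)) - 1$. Consider the map $Q^k_t \to \Pic^t(C)$, $D \mapsto \OO(D)$; its fibre over $L$ is $|L| \cong \PP^r$. Composing with the finite map $\Pic^t(C) \to \Pic^e(C)$, $L \mapsto L^2\otimes K_C^\vee$, which is a translate of multiplication by $2$, the image of $Q^k_t$ lands in $W^{k-1}_e(C)$. Hence
\begin{equation*}
\dim\{\OO(D) \mid D\in Q^k_t,\ r(D)=r\} \leq \dim\{N \in W^{k-1}_e(C) \mid N \cong L^2K_C^\vee,\ h^0(L)=r+1\}.
\end{equation*}
It then suffices to show that this locus has dimension at most $t-k-r$.

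To get the bound I would use Lemma \ref{lemma:bad_i}. Write $N_D = \OO(E)$ with $E$ effective of degree $e$. The relation $2D \sim K_C + E$ gives $\OO(D)(-E) \cong K_C(-D)$, and $h^0(K_C(-D)) = g - t + r > 0$ since $t<g$ or $r\geq 1$ in the relevant range. So $\OO(E)$ lies in $\{\OO(E)\in W^0_e \mid h^0(\OO(D)(-E))>0\}$, which Lemma \ref{lemma:bad_i} bounds by $r$ for fixed $\OO(D)$. Combining this with the surjection in the other direction (given $N$, there are only $2^{2g}$ choices of $L$) should give the required trade-off between $r$, $k$ and the moduli of $L$. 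Concretely, I would aim for $\dim(\text{locus of }L) \leq e - 2(k-1) - r\cdot(\text{something})$ via Martens' theorem applied to $W^{k-1}_e$, and then add $r$.

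\textbf{Main obstacle.} The hard part is exactly this last combination. Na\"ively adding Martens' bound $e-2k+2$ to the fibre dimension $r \le t/2$ overshoots $t-k$. The plan is therefore to let the incidence correspondence of Lemma \ref{lemma:bad_i}, namely pairs $(E, F)$ with $F\in|L|$ and $E \leq F$, absorb the fibre $|L|$, so that $r$ is not counted twice. If that fails, I would instead parametrize $Q^k_t$ directly by pairs $(D,E)$ with $2D \in |K_C(E)|$. Such a pair has $E \in |N|\cong \PP^{k-1}$, and I would bound the locus of doubled divisors inside $|K_C(E)|$, which has dimension $2t-g$, by $t-2k+1$.
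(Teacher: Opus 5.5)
Your Steps 1 and 2 are correct and agree with the paper. You show $D\in Q^k_t$ iff $h^0(K_C^\vee(2D))=k$, and Clifford applied to this effective special bundle of degree $e=2t-2g+2\le g-1$ gives emptiness for $k>t+2-g$.

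The gap is Step 3, which is the real content of the lemma and which you leave open. The detour through Lemma \ref{lemma:bad_i} cannot supply the missing trade-off. For fixed $\OO(D)$ the class $\OO(E)\cong K_C^\vee(2D)$ is a single point, so the bound ``$\le r$'' is vacuous. Your alternative parametrization by pairs $(D,E)$ just reduces again to bounding $\max r(D)$.

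The missing idea is to bound the fibre $|D|$ in terms of $k$, not by Clifford on $D$:
\begin{itemize}
\item By Riemann--Roch, $r(D)=t-g+1+r(K_C-D)$.
\item The inequality $r(A)+r(B)\le r(A+B)$ for effective divisors (the key step in the proof of Clifford's theorem) gives $2r(K_C-D)\le r(2K_C-2D)=3g-4-2t+k$.
\item Hence $r(D)\le\frac{g+k-2}{2}$.
\end{itemize}
The map $D\mapsto K_C^\vee(2D)$ sends $Q^k_t$ to $W^{k-1}_e$, with fibres finite unions of linear systems $|D|$. For $k\ge2$, Martens' theorem applies because $2\le 2(k-1)\le e\le g-1$, and gives
\[
\dim Q^k_t\le e-2(k-1)+\tfrac{g+k-2}{2}=2t-\tfrac32(g-1)-\tfrac32(k-1)\le t-\tfrac32(k-1).
\]
This is $\le t-k$ for $k\ge3$, and $\le t-2$ for $k=2$ by integrality. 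This is the paper's argument.

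For $k=1$ you would hit a further problem. Martens gives nothing beyond $\dim W^0_e=e$, and the same count only yields $\dim Q^1_t\le 2t-\frac32(g-1)$. This equals $t$ when $t=\frac32(g-1)$, so it misses the bound by one.

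The paper handles $k=1$ by asserting $r(D)=0$, but this is false in general. On a hyperelliptic curve of genus $3$, every $D\in|g^1_2|$ satisfies $2D\sim K_C$, so $D\in Q^1_2$ with $r(D)=1$.

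A correct argument uses semicontinuity. The set $\{D\in C_t\mid h^0(K_C^\vee(2D))\ge1\}$ is closed in the irreducible $t$-dimensional $C_t$. It is a proper subset: the classes $\OO(2D)\otimes K_C^\vee$ sweep out the image of $W_t$ under the finite map $L\mapsto L^2\otimes K_C^\vee$. That image has dimension $\min(t,g)$, which exceeds $e=\dim W_e$ when $g-1\le t\le\frac32(g-1)$. Hence $\dim Q^1_t\le t-1$.
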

	\begin{proof}
			If we only care about the linear equivalence class of $D$, we need to consider the following locus:
			\begin{equation*}
				R_t^k = \left\{\OO(D) \in W^0_t(C) \mid h^0(K_C^2(-2D)) = 3g-3-2t+k\right\}.
			\end{equation*} 
			This algebraic set comes with a quasi-finite map to $W_{4(g-1)-2t}^{3(g-1)-1-2t+k}(C)$: We just send $\OO(D)$ to $K_C^2(-2D)$. The fiber over $M \in W_{4(g-1)-2t}^{3(g-1)-1-2t+k}(C)$ is a subset of the square roots of $K_C^2\otimes M^\vee$. Hence, the map has finite fibers. By Serre duality we have $W_{4(g-1)-2t}^{3(g-1)-1-2t+k}(C) \cong W^{k-1}_{2t-2(g-1)}(C)$. If $k > t+2-g$, then this Brill-Noether locus is empty by Clifford's theorem. This implies $Q_t^k = \emptyset$. If $k \leq t+2-g$, Marten's theorem \cite[Theorem 1]{martens1967} implies
			\begin{equation}\label{eq:base}
				\dim R_t^k\leq \dim W^{k-1}_{2t-2(g-1)}(C)\leq 2t-2(g-1) -2(k-1).
			\end{equation}
			
			 Furthermore, there is a natural map $Q_t^k \to R_t^k$. The fiber over $\OO(D) \in R_t^k$ is the complete linear system $|D|$. Let us estimate its dimension $r(D)$ for $k > 1$:
			 \begin{align}\label{eq:fiber}
			 	r(D) &= t -(g-1) + r(K_C(-D)) \\
			 	&\leq t - (g-1) + \frac{1}{2}r(K_C^2(-2D)) \\
			 	&= t-(g-1) +\frac{1	}{2}(3(g-1)-1-2t+k).
			 \end{align}
			  
		In the second line we used the general fact that $r(E) \leq \frac{1}{2}r(2E)$ holds for every divisor. This is a byproduct of the proof of Clifford's theorem. See for example \cite[III, §1]{acgh}. If $k = 1$, we need to have $r(D) = 0$. We now have an estimate for the dimension of every fiber of $Q_t^k \to R_t^k$ and of the the base. In total we get from (\ref{eq:fiber}) and (\ref{eq:base}) for $k > 1$:
		\begin{align*}
			\dim Q_t^k &\leq t-(g-1) +\frac{1	}{2}(3(g-1)-1-2t+k) + 2t-2(g-1) -2(k-1) \\
			&= 2t - \frac{3}{2}(g-1) - \frac{3}{2}k + \frac{3}{2}\\
			&= t - \frac{3}{2}k + \frac{3}{2}.
		\end{align*}
		For $k = 1$, one computes that $\dim Q_t^k \leq 2t - 2(g-1)$. Both estimates imply $\dim Q_t^k \leq t-k$.
	\end{proof}
	\begin{lemma}
		Assume $ 0 < s \leq 2g-2$ and $0 < t \leq g-1 + \frac{s}{2}$. There exists $L \in \op{Pic}^{s}(C)$ such that $K_C\otimes L^\vee$ is effective and 
		\begin{equation}\label{eq:dim_ineq}
			\dim\cA_{L,t} \leq g-2 + s-t
		\end{equation}
		holds. 
	\end{lemma}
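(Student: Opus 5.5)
The plan is to take $L = K_C(-F)$ for a general effective divisor $F$ of degree $f := 2g-2-s \geq 0$. Then $K_C\otimes L^\vee \cong \OO(F)$ is effective automatically, and
\begin{equation*}
	\cA_{L,t} = \left\{ D \in C_t \mid h^0(K_C^2(-2D-F)) > 0 \right\}.
\end{equation*}
To bound this for general $F$, I would use the incidence variety
\begin{equation*}
	I := \left\{ (D,F) \in C_t \times C_f \mid h^0(K_C^2(-2D-F)) > 0 \right\}.
\end{equation*}
The fiber of the second projection $I \to C_f$ over $F$ is exactly $\cA_{K_C(-F),t}$. So it suffices to prove $\dim I \leq 3g-4-t$. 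Indeed, then either $I \to C_f$ is not dominant, so the general fiber is empty, or the general fiber has dimension at most $3g-4-t-f = g-2+s-t$, which is (\ref{eq:dim_ineq}).

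To bound $\dim I$ I would project to $C_t$ and stratify by the sets $Q_t^k$. Fix $D \in Q_t^k$ and put $M := K_C^2(-2D)$, so $h^0(M) = 3g-3-2t+k$. The fiber of $I$ over $D$ is the set of $F \in C_f$ with $h^0(M(-F))>0$, i.e.\ those $F$ lying under some divisor of $|M|$. As in the proof of Lemma \ref{lemma:bad_i}, this set is the image of a finite map from the incidence variety $\{(F,G) : F \leq G \in |M|\}$. Working on divisors rather than line bundle classes, it therefore has dimension at most $h^0(M)-1 = 3g-4-2t+k$. This step needs $f < \deg M = 4g-4-2t$, which is equivalent to $t < g-1+\frac{s}{2}$. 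I would then sum fiber and base dimensions:
\begin{itemize}
	\item for $k = 0$, use the trivial bound $\dim Q_t^0 \leq t$, which gives $t + 3g-4-2t = 3g-4-t$;
	\item for $k \geq 1$, Lemma \ref{lemma:bad_ii} gives $\dim Q_t^k \leq t-k$ (and $Q_t^k=\emptyset$ for large $k$), which gives $t-k+3g-4-2t+k = 3g-4-t$.
\end{itemize}
Since only finitely many strata occur, $\dim I \leq 3g-4-t$.

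I expect the main obstacle to be the edge cases excluded by the hypotheses of the two earlier lemmas. The first is $t > \frac{3}{2}(g-1)$, where Lemma \ref{lemma:bad_ii} is not available. Here I would try to bound $\dim Q_t^k$ directly: when $2t > 2g-2$ we have $h^0(K_C^2(-2D)) = h^1$-corrected Riemann–Roch, so $k = h^0(\OO(2D)\otimes K_C^\vee)$. I would then bound the locus where $2D - K_C$ is effective with $k$ sections via Martens/Clifford, as in the proof of Lemma \ref{lemma:bad_ii}, but keeping track of $r(D)$ more carefully.

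The second edge case is the boundary $t = g-1+\frac{s}{2}$, where $\deg M = f$. There the condition becomes $F \in |M|$, i.e.\ $\OO(F) \cong K_C^2(-2D)$. I would count this directly: the pairs $(D,F)$ map to $C_t$ with fibers $|K_C^2(-2D)|$, which is again controlled by the $Q_t^k$ stratification. Finally, the degenerate case $s = 2g-2$ (so $f=0$, $L = K_C$) is just the statement $\dim \{D : h^0(K_C^2(-2D))>0\} \leq 3g-4-t$. This follows from the same stratification with no $F$ at all, or is vacuous because $t \leq 2g-2$.
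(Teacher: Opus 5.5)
Your strategy is the paper's. You fibre an incidence variety over $C_t$, bound the fibres as in Lemma \ref{lemma:bad_i} by $h^0(M)-1=3g-4-2t+k$, and stratify the base by the $Q_t^k$ using Lemma \ref{lemma:bad_ii}. Working with divisors $F\in C_f$ instead of classes in $W^0_{2g-2-s}(C)$ just spares you the paper's separate count for $s\le g-2$. The boundary case $t=g-1+\frac{s}{2}$ is harmless, since $\{(F,G)\mid F\le G\in|M|\}$ is still finite over $|M|$ when $f=\deg M$. So your argument is complete whenever $t\le\frac32(g-1)$, in particular for all $s\le g-1$.

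The gap is the range $\frac32(g-1)<t\le g-1+\frac{s}{2}$, which is nonempty only for $s\ge g$. You rightly note that Lemma \ref{lemma:bad_ii} is unavailable there. The paper's proof misses this: its inequality $g-1+\frac{s}{2}\le\frac32(g-1)$ is false for $s>g-1$. But you do not close this range, and the route you suggest cannot work. Here $\deg(2D-K_C)=2t-2g+2\ge g$, so $h^0(2D-K_C)\ge k_0:=2t-3g+3\ge 1$ for \emph{every} $D$. Hence $Q_t^{k_0}$ is dense in $C_t$, and no bound $\dim Q_t^k\le t-k$ can hold.

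For $t\le 2g-3$ the argument can be repaired as follows.
Discard the strata where $j:=h^0(K_C^2(-2D))=k-k_0$ is $0$, since their fibres are empty. Then stratify by $j\ge 1$ and $a:=h^0(K_C-D)$.
\begin{itemize}
\item The class $\OO(D)$ is determined up to finitely many choices by $K_C^2(-2D)\in W^{j-1}_e(C)$, where $e=4g-4-2t\le g-2$. Martens therefore gives at most $e-2(j-1)$ parameters.
\item The linear system $|D|$ has dimension $t-g+a$, with $2a-1\le j$.
\item The fibre of your incidence over $D$ has dimension at most $j-1$.
\end{itemize}
Adding up gives at most $e+t-g=3g-4-t$, except when $a=j=1$. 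In that case $K_C-D$ is effective of degree $e/2$. This gives only $e/2$ parameters, which suffices iff $e\ge 2$, i.e.\ $t\le 2g-3$.

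At $t=2g-2$ the statement itself is false; this forces $s=2g-2$, $f=0$ and $L=K_C$. Indeed $\cA_{K_C,2g-2}=\{D\in C_{2g-2}\mid 2D\sim 2K_C\}$ contains $|K_C|\cong\PP^{g-1}$. Its dimension is therefore at least $g-1>g-2=g-2+s-t$. So your closing claim that this case ``follows from the same stratification, or is vacuous'' is wrong, and this corner must be excluded from the lemma.

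The corner is harmless for Theorem \ref{thm:B}. For $D\in|K_C|$ one has $L(-D)\cong\OO_C$, so the fibre of $\cB_{L,t}$ drops to dimension $t-1$. One still gets $\dim\cB_{K_C,2g-2}\le 3g-4<h^1(K_C^\vee)$.
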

	\begin{proof} We use the following incidence correspondence. 
		\begin{equation*}
			\cA_{s,t} := \{ (\OO(E),D) \in W^0_{2g-2 -s}(C) \times C_t \mid h^0(K_C^2(-E-2D)) > 0\}.
		\end{equation*}
		We have a projection $p$ sending $(\OO(E),D)$ to $K_C(-E)$. Its image consists of all $L\in \op{Pic}^s(C)$ such that $h^0(K_C\otimes L^\vee) > 0$. The fiber of $p$ over $L \in W^{1-g+s}_s(C)$ is $\cA_{L,t}$. Hence, we need to compute the dimension of $\cA_{s,t}$.
		We split this computation into two cases: First we consider $s \leq g-2$. Then, $K_C\otimes L^\vee$ is effective for all $L \in \op{Pic}^s(C)$. We have 
		\begin{equation*}
			\cA_{s,t} \cong W^0_{2g-2+s-2t} \times C_t.
		\end{equation*} 
		The isomorphism is given by sending $(\OO(E),D) \in \cA_{s,t}$ to $K_C(-2D-E)$. Therefore, for a general $L \in W_s^{1-g+s}(C)$ we have
		\begin{equation*}
			\dim p^{-1}(L) = \dim \cA_{s,t} - \dim \op{Pic}^s(C) = g-2+s-t.
		\end{equation*} 
	
	Now we assume $s \geq g-1$.	
	 Consider the second projection $p_2: \cA_{s,t} \to C_t$. Assume $D \in Q_t^k$. Then we can apply Lemma \ref{lemma:bad_i} with $M = K_C^2(-2D)$ to deduce
	\begin{equation*}
		\dim p_2^{-1}(D) = 3g-4 - 2t +k.
	\end{equation*}
	We can apply Lemma \ref{lemma:bad_ii} because $t \leq g-1+\frac{s}{2}\leq \frac{3}{2}(g-1)$. Therefore, the inverse image $p_2^{-1}(Q^k_t)$ has dimension $\leq 3g-4 -t$. We conclude that $\dim \cA_{s,t} \leq 3g-4-t$. For a general $L \in W^{1-g+s}_s(C)$ we therefore have: 
	\begin{equation*}
		\dim p^{-1}(L) = \dim \cA_{s,t} - \dim W^{1-g+s}_s(C) = g-2+s-t.
	\end{equation*}
	This is what we wanted to show.
	\end{proof}
	\begin{proof} (of Theorem \ref{thm:B})
		Recall from the introduction that any vector bundle $W$ in $\cW_k^0$ has the following form: There is $M \in Z_k$ such that $W$is an extension of $M^\vee \otimes \Lambda$ by $M$. Note that the bad line subbundles of $W$ are in bijection with bad line subbundles of $W \otimes M^\vee$. Furthermore there is this étale covering of degree $2^{2g}$:
		\begin{equation*}
			Z_k \to \left\{ L \in \op{Pic}^{2k-2+\lambda}(C) \mid h^0(L^\vee\otimes K_C) > 0\right\},\quad M \mapsto M^2 \otimes \Lambda^\vee\otimes K_C.
		\end{equation*} 
		Therefore, we can prescribe $L = M^2 \otimes \Lambda^\vee \otimes K_C$ first
		and we are reduced to the following situation: Take a general $L \in \op{Pic}^{2k-2+\lambda}(C)$ such that $h^0(L^\vee \otimes K_C) > 0$. Consider a general extension $\xi$ of $L$ by $\OO_C$ and its induced vector bundle $V$. We need to show that $V$ has the unique bad line subbundle $\OO_C$. We abbreviate $\deg L$ by $s = 2k-2 +\lambda$. It is in the range $0< s \leq 2g-2- 
		\lambda$. 
		
	    Assume that $M$ is another bad line subbundle of $V$.
		We note that there exists a nonzero morphism $M \to L$. Hence $L$ has the form $M \cong L(-D)$ for some effective divisor $D$. By construction we have the following diagram:
		\begin{equation*}
			\begin{tikzcd}
				0 \ar[r] & \OO_C \ar[r] \ar[d,"="] &\OO_C\oplus L(-D) \ar[r] \ar[d,hook] & L(-D) \ar[r] \ar[d,hook] & 0 \\
				0 \ar[r] & \OO_C \ar[r] &V  \ar[r] & L \ar[r] & 0.
			\end{tikzcd}
		\end{equation*}
		 Hence $\xi$ is in the kernel of 
		\begin{equation*}
			H^1(L^\vee) = \op{Ext}^1(L,\OO_C) \to \op{Ext}^1(L(-D),\OO_C) = H^1(L^\vee(D)).
		\end{equation*}
		For $L(-D)$ to be a bad line subbundle we need to have $\op{Hom}(V/L(-D),L(-D)\otimes K_C) \neq 0$. This is equivalent to $D \in \cA_{L,t}$. Furthermore, we need to have $t \leq g-1 + \frac{s}{2}$ in order for $K_C \otimes L(-2D)$ to have nonnegative degree.
		We now consider:
		\begin{align*}
			\mathcal{B}_{L,t} := \left\{(D ,\xi)\in \cA_{L,t}\times H^1(L^\vee) \mid  \xi \in \op{ker}\left( H^1(L^\vee) \to H^1(L^\vee(D))\right)\right\}
		\end{align*} 
		Projection to the second factor gives a map $\mathcal{B}_{L,t} \to H^1(L^\vee)$. Its image contains all extensions having a second bad line subbundle. 
		The space $\mathcal{B}_{L,t}$ maps to $\cA_{L,t}$ and the fiber of $D$ is $\ker\left(H^1(L^\vee) \to H^1(L^\vee(D))\right)$. We have
		\begin{equation*}
			\dim \ker\left(H^1(L^\vee) \to H^1(L^\vee(D))\right) = \begin{cases}
				t -1, &L(-D) \cong \OO_C, \\
				t , &\text{else.}
			\end{cases}
		\end{equation*}
		Hence we have $\dim \mathcal{B}_{L,t} \leq \dim \cA_{L,t} +t \leq g-2 + s < h^1(L^\vee)$. This is what we wanted to show.
	\end{proof}
	For the application in the next section we will need the following refinement of the last result:
	\begin{lemma}\label{lemma:general_wobbly_bundle}
		Assume $\lceil\frac{g-\lambda}{2}\rceil \leq k \leq g-\lambda$. Then there exists a vector bundle $V$ in $\cW_k$ satisfying the following properties:
		\begin{itemize}
			\item[(1)] $V$ has a unique bad line subbundle $L_1$ of degree $1-k$. Write $L_2 := V/L_1$ and let $\xi$ be the extension of $L_2$ by $L_1$ induced by $V$.
			\item[(2)] The space $H^0(L_1\otimes L_2^\vee\otimes K_C)$ is generated by some $\phi_{12}$. 
			\item[(3)] There is no solution of the following system of equations:
			\begin{equation}\label{eq:weird_cond}
				2\phi_{12}\phi_{21}+\phi_{11}^2 = 0, \quad \phi_{11}\cup \xi = \phi_{21} \cup \xi = 0,
			\end{equation}
			with $\phi_{21}\in H^0(L_1^\vee\otimes L_2\otimes K_c), \phi_{11}\in H^0(K_C)$.
		\end{itemize}
	\end{lemma}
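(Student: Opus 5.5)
The plan is to construct $V$ as a general extension $\xi \in \op{Ext}^1(L_2,L_1)$, where $L_1 = M \in Z_k$ and $L_2 = M^\vee\otimes\Lambda$. I would first choose $M$ so that (2) holds, and then show that (1) and (3) each hold for $\xi$ in a nonempty Zariski open subset of $\PP(\cV_k)_M^s$. Intersecting these open sets gives the required $V$.

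\textbf{Step 1: choosing $M$ so that (2) holds.} Set $E$ to be the divisor of $\phi_{12}$, so that $L_1\otimes L_2^\vee\otimes K_C = M^2\otimes\Lambda^\vee\otimes K_C \cong \OO(E)$. Its degree is $e := 2g-2k-\lambda$. The hypothesis $k\ge \lceil\frac{g-\lambda}{2}\rceil$ gives $0\le e\le g$. A general effective divisor of degree $e\le g$ satisfies $h^0(\OO(E))=1$ and $h^0(K_C(-E)) = g-e = 2k+\lambda-g$. Using the étale cover $M\mapsto M^2\otimes\Lambda^\vee\otimes K_C$ from the proof of Theorem~\ref{thm:B}, I would choose $M$ so that $\OO(E)$ is such a general divisor class, and take $E$ reduced. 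For this $M$, Theorem~\ref{thm:B} (its proof works for general $L$, and $h^0=1$ is an open condition on $L$) shows that a general $\xi$ gives a stable $V_\xi$ whose unique bad line subbundle is $L_1$. This gives (1).

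\textbf{Step 2: the solution set of the quadratic equation.} I read (3) as saying there is no nonzero solution, since $(0,0)$ is always a solution. Suppose $2\phi_{12}\phi_{21} = -\phi_{11}^2$. Then $\phi_{11}^2$ vanishes on $E$, and since $E$ is reduced, $\phi_{11}=\phi_{12}a$ for some $a\in H^0(K_C(-E))$. It follows that $\phi_{21} = -\tfrac12 \phi_{12}a^2$, which lies in $H^0(K_C^2(-E)) = H^0(L_1^\vee\otimes L_2\otimes K_C)$. So the nonzero solutions of the quadratic equation are parametrised, up to scaling, by $[a]\in\PP H^0(K_C(-E))$. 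This family has dimension $2k+\lambda-g-1$, and there are no nonzero solutions at all if $2k+\lambda=g$.

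\textbf{Step 3: incidence count for (3).} Fix a nonzero $a$ and ask for which $\xi\in H^1(L_1\otimes L_2^\vee)$ both conditions $\phi_{11}\cup\xi=0$ and $\phi_{21}\cup\xi=0$ hold.
\begin{itemize}
\item By Serre duality, $\xi\mapsto \phi_{11}\cup\xi\in H^1(\OO(E))$ is dual to multiplication by $\phi_{11}$, namely $H^0(K_C(-E))\to H^0(K_C^2(-E))$, $b\mapsto \phi_{12}ab$. This map is injective, so the condition $\phi_{11}\cup\xi=0$ cuts out a linear subspace of codimension $2k+\lambda-g$.
\item The condition $\phi_{21}\cup\xi=0$ is the functional given by $\phi_{21}=-\tfrac12\phi_{12}a\cdot a$. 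This already lies in that span (take $b=a$), so it imposes nothing new.
\end{itemize}
Form the incidence variety $\{([a],[\xi])\}$. Its dimension is at most $(2k+\lambda-g-1)+(g+2k+\lambda-4)-(2k+\lambda-g)$, which is one less than $\dim\PP\op{Ext}^1(L_2,L_1) = g+2k+\lambda-4$. Hence its image in $\PP\op{Ext}^1(L_2,L_1)$ is a proper closed subset, and (3) holds for general $\xi$.

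\textbf{Main obstacle.} The delicate point is Step 2–3: recognising that the $\phi_{21}$-condition is dependent on the $\phi_{11}$-conditions, and still getting a strict inequality in the count. A naive count that treats the $\phi_{21}$-condition as independent would give a wrong bound, so I would double-check the reducedness of $E$ and the injectivity of multiplication by $\phi_{12}a$. A secondary point is making sure the generality required of $E$ (reduced, $h^0(\OO(E))=1$) is compatible with the generality of $L$ used in Theorem~\ref{thm:B}. Both are open dense conditions in the irreducible locus of effective classes of degree $e$, so they should be simultaneously achievable.
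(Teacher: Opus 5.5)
Your proposal is correct and follows essentially the same route as the paper. In both, you choose $M$ so that $\div\phi_{12}$ is reduced with $h^0=1$, obtain (1) from the proof of Theorem~B, and use reducedness to write $\phi_{11}=\phi_{12}a$. You then show that the union over $[a]$ of the kernels of $\xi\mapsto\phi_{12}a\cup\xi$ (each of codimension $2k+\lambda-g$) is a proper subset of the extension space.

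The only difference is in how the condition $\phi_{21}\cup\xi=0$ is handled. The paper simply discards it, since only an upper bound on the bad locus is needed; so the step you flag as delicate is not really an obstacle. You instead show that this condition is already implied by $\phi_{11}\cup\xi=0$.
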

	\begin{proof}
		We use the notation from the proof of Theorem \ref{thm:B} . Fix $L \in W^0_{\lambda - 2 + 2k}(C)$ such that $\dim \cA_{L,t}$ satisfies inequality (\ref{eq:dim_ineq}) for all $t$ in the range $1\leq t \leq g-1 + \frac{\deg L}{2}$. We also assume $H^0(L)$ is one dimensional and generated by $\phi_{12}$ such that $\div \phi_{12}$ is reduced.
		Choose a line bundle $L_1$ such that $L_1^2 \otimes L \cong \Lambda$ and put $L_2 = L_1\otimes L$. Note that the degree of $L_1$ is $1-k$. We abbreviate the degree of $L_i$ by $d_i$ for $i = 1,2$. The proof of the last Lemma shows that a general extension $\xi \in \op{Ext}^1(L_2,L_1)$ produces a vector bundle whose only bad line subbundle is $L_1$. 
		
		We now show that we can choose $\xi$ such that (3) is satisfied. We first reformulate (\ref{eq:weird_cond}). Assume we have a solution $(\phi_{21},\phi_{11})$ for some $\xi$. 
		Then every zero of $\phi_{12}$ is also a zero of $\phi_{11}$. Because $\div \phi_{12}$ is reduced, we conclude that there exists $s\in H^0(L_1^\vee\otimes L_2)$ such that $\phi_{11} = \phi_{12}s$. Therefore, the following condition implies (3):
		\begin{equation*}
			\phi_{12}s \cup \xi \neq 0\quad \text{for all $s \in H^0(L_1^\vee\otimes L_2)$}.
		\end{equation*}
		It suffices to show that the following algebraic subset of $H^1(L_1\otimes L_2^\vee)$ is a proper subset:
		\begin{equation*}
			\cB := \bigcup_{[s] \in \PP H^0(L_1^\vee \otimes L_2)} \ker\left( H^1(L_1\otimes L_2^\vee) \xrightarrow{\phi_{12}s\cdot} H^1(L_1^\vee\otimes L_2 \otimes K_C)\right).
		\end{equation*}
		We give an upper bound on the dimension of $\cB$. First observe that 
		\begin{equation*}
			\dim \ker(H^1(L_1\otimes L_2^\vee) \xrightarrow{\phi_{12}s\cdot} H^1(L_1^\vee\otimes L_2 \otimes K_C)) = \begin{cases}
				2g-2, & d_2 - d_1 = g-2,\\
				2g-3, &d_2 - d_1 > g-2. 
			\end{cases}
		\end{equation*} 
		If $d_2 - d_1 = g-2$ we have $h^0(L_1^\vee\otimes L_2) = 1-g + d_2 -d_1$ and if $d_2 - d_1 > g-2$ we have $h^0(L_1^\vee\otimes L_2) = 2-g+d_2 -d_1$. In both cases it follows that $\dim \cB \leq h^1(L_1\otimes L_2^\vee) -1$.
		\end{proof}
	\section{The restricted Hitchin map of a general wobbly vector bundle}\label{section:wobbly}
	Now we prove the first part of Theorem \ref{thm:C}: A general wobbly vector bundle $V  \in \cW_k$ has a dominant restricted Hitchin map. Recall that we assume $\lceil\frac{g-\lambda}{2}\rceil \leq k \leq g-\lambda$. A key input will be the result of the last section that a general wobbly vector bundle has exactly one bad line subbundle.  
	\begin{proof} (of Theorem \ref{thm:C} (i))
	Take a vector bundle $V$ satisfying the conditions of Lemma \ref{lemma:general_wobbly_bundle}. Let $\phi$ be a nilpotent twisted endomorphism of $V$. By our choice of $V$ it is unique up to a scalar.  We will denote $\PP H^0(\End_0(V) \otimes K_C)$ by $\PP \cE$.
	We will denote the projectivization of the restricted Hitchin map by $h$.
	 The indeterminacy locus of $h$ is $\{[\phi]\}$. Let us blow up $[\phi]$ in $\PP\mathcal{E}$. We have a projection $\pi_1: \op{Bl}_{[\phi]}\PP \cE  \to \PP \cE$. We can extend $h$ to $h': \op{Bl}_{[\phi]}\PP\cE \dashrightarrow \PP H^0(K_C^2)$ such that the codimension of the indeterminacy locus of $h''$ has codimension atleast $2$. We have $(h')^*\OO(1) \cong \OO(2H - E_1)$, where $H$ is the pullback of the hyperplane class and $E_1$ is the exceptional divisor. For the latter divisor we have
	\begin{equation*}
		E_1 \cong \PP \left( H^0(\End_0(V)\otimes K_C)/\langle \phi\rangle \right).
	\end{equation*}
	The map on the exceptional divisor is the projectivization of the differential of $h$: Let $\phi_{12} \in H^0(K_C\otimes L^\vee)$ be the section inducing $\phi$ and
	 $[\chi]$ be in $E_1$. Then (\ref{eq:diff_nilpotent}) shows:
	\begin{equation*}
		h'([\chi]) = [d_\phi h_V(\chi)] = [\phi_{12}c_\xi(\chi)].
	\end{equation*}
	 By Lemma \ref{lemma:ker_diff} the indeterminacy locus of $h'$ is given by 
	\begin{equation*}
	M = \PP \left( \ker(c_\xi)/\langle \phi\rangle \right) \subseteq E_1.
	\end{equation*} 
	Using Lemma \ref{lemma:image_c} one can quickly compute that $M$ is empty if and only if $k = g, \lambda = 0$. In this case one can prove that $\OO(2H-E_1)$ is ample. Therefore, $h'$ is finite. 
	
	Let us now treat the remaining cases.
	We blow up $M$ and obtain a rational map $h'': \op{Bl}_{M}\op{Bl}_{[\phi]}\PP \mathcal{E} \dashrightarrow \PP H^0(K_C^2)$. Denote the projection $\op{Bl}_{M}\op{Bl}_{[\phi]} \PP \cE \to  \op{Bl}_{[\phi]}\PP\cE$ by $\pi_2$. Call its exceptional divisor $E_2$. We have $(h'')^*\OO(1) \cong \OO(2H-\pi_2^*E_1 -E_2)$. We show that it is in fact a morphism.  The normal space of $M$ at a point $[\chi]\in M$ is isomorphic to 
	\begin{equation*}
		N_{M}\op{Bl}_{[\phi]}\PP \cE_{|[\chi]} \cong \langle \chi \rangle \oplus \frac{H^0(\End_0(V)\otimes K_C)}{\op{im}(c_\xi)}.
	\end{equation*}
	We can use this to describe $h''$ on the exceptional divisor $E_2$:  
	\begin{equation}\label{eq:diff_h''}
		h''(([\chi],[\chi',\psi])) = [2\phi_{12}c_\xi(\psi) + \chi_{11}\chi'_{11}].
	\end{equation}
	Here $[\chi] \in E_1$ and $(\chi',\psi)$ is in $N_{M}\op{Bl}_{[\phi]}\PP \cE_{|[\chi]}$. Furthermore, we write $\chi_{11} := \chi_{|L_1} \in H^0(K_C)$. To prove (\ref{eq:diff_h''}) observe that the map $\Aff^1\setminus \{0\} \to \PP \cE, z\mapsto [\phi+ z\chi + z^2\psi]$ can be extended to a map 
	 $\Aff^1 \to \op{Bl}_{M}\op{Bl}_{[\phi]}\PP\cE$ which maps $0$ to $([\chi],[\chi,\psi])\in E_2$. Then we compute:
	\begin{equation*}
	[\op{tr}\left( (\phi+ z\chi + z^2\psi)^2 \right)] =  [z^2\op{tr}(2\phi\psi+\chi^2)] = [2\phi_{12}c_{\xi}(\psi)+\chi_{11}^2].
	\end{equation*}
	Furthermore, we observe that $\chi_{11}$ satisfies $\chi_{11}\cup \xi=0$. To see this use the following long exact sequence:
	\begin{align*}
		0 &\to H^0(L_2^\vee \otimes L_1 \otimes K_C) \to H^0(V^\vee\otimes L_1\otimes K_C) \to H^0(K_C) \\
		&\xrightarrow{ (-)\cup \xi} H^1(L_2^\vee\otimes L_1 \otimes K_C) \to H^1(V^\vee\otimes L_1\otimes K_C) \to H^1(K_C) \to 0.
	\end{align*}
	Recall from Lemma \ref{lemma:image_c} that $ c_\xi(\phi) \cup \xi= 0$. We see that any base point of $h''$ gives a non-trivial solution to (\ref{eq:weird_cond}). Therefore, $h''$ is base-point free. 
	
	Lemma \ref{lemma:linear_system} is going to show that the map
	\begin{equation*}
		\phi_{2H-\pi_2^*E_1-E_2}: \PP \cE \to \PP H^0(\OO(2H-\pi_2^*E_1-E_M))^\vee
	\end{equation*}
	 is injective on $(\pi_2\circ \pi_1)^{-1}(\PP\cE\setminus \PP(\ker c_\xi))$. Denote by $\pi$ the projection $\PP H^0(\OO(2H-\pi_2^*E_1-E_M))^\vee \dashrightarrow \PP H^0(K_C^2)$. We can write:
	\begin{equation*}
		h''= \pi_{|\op{im}\varphi_{2H-\pi_2^*E_1-E_2}}\circ\varphi_{2H-\pi_2^*E_1-E_2}.
	\end{equation*}
	The map $\pi_{|\op{im}\varphi_{2H-\pi_2^*E_1-E_2}}$ is finite because it is a projection. This implies that the restriction of $h$  $\PP\cE\setminus \PP(\ker c_\xi)$ has finite fibers.
	 We now take $\eta \in H^0(K_C^2)$ defining an irreducible spectral curve. Let $\psi$ be a twisted endomorphism with $h_V(\psi) = \eta$. From Lemma \ref{lemma:reducible} we deduce $\psi \notin \ker c_{\xi}$ . We have shown that $h$ is finite on $\PP \cE\setminus \PP \ker_\xi$, hence  $\psi$ is an isolated point of $h_V^{-1}(\eta)$. 
\end{proof}
\section{Reducible curves}\label{section:reducible}
Fix a vector bundle $V$ of rank $2$ over $C$ whose determinant we denote by $\Lambda$. Consider reducible spectral curves of degree $2$ over $C$. They are of the form $\tilde{C} = V(\tau^2 - \omega^2)$ where $\omega$ is a nonzero holomorphic differential on $C$. We want to describe the fiber of $h_V$ over $\omega^2$. In other words we want to find all line bundles $M$ on $\tilde{C}$ whose pushforward is isomorphic to $V$. This is achieved by the following proposition. Before we can state it let us introduce some notation. Consider the exact sequence:
\begin{equation*}
	0 \to L \to L\otimes K_C \to L\otimes K_{C|\op{div}\omega} \to 0.
\end{equation*} 
We then define the map $\delta$ as the connection map of the following long exact sequence:
\begin{align}
	0 &\to \op{Hom}(\Lambda\otimes L^\vee,L) \to \op{Hom}(\Lambda\otimes L^\vee,L\otimes K_C) \to \op{Hom}(\Lambda\otimes L^\vee,L\otimes K_{C|\op{div}\omega}) \\
	&\xrightarrow{\delta} \op{Ext}^1 (\Lambda\otimes L^\vee,L) \to \op{Ext}^1(\Lambda\otimes L^\vee,L\otimes K_C) \to 0\label{eq:long_exact_delta}
\end{align}
\begin{proposition}\label{prop:reducible}
	Let $V$ be a vector bundle of rank $2$ on $C$. Write $\Lambda = \det V$. Then $h_V^{-1}(\omega^2)$ is in bijection with the following set:
	\begin{equation*}
		\left\{ (i:L \hookrightarrow V, \tilde{\xi}) \;\middle|\; \begin{array}{l}
			\text{$L$ is a line subbundle of $V$} \\
			\text{inducing the extension class $\xi \in \op{Ext}^1 (\Lambda\otimes L^\vee,L) $}, \\
			\text{$\tilde{\xi} : \Lambda \otimes L^\vee_{|\op{div}\omega} \xrightarrow{\sim} L\otimes K_{C|\op{div}\omega}$, such that $\delta(\tilde{\xi}) = \xi$.}
		\end{array} \right\}/\sim
	\end{equation*}
	Here we are considering pairs $(L,\tilde{\xi})$ and $(L',\tilde{\xi}')$ as equivalent if $L \cong L'$ and $\tilde{\xi}' = \lambda \tilde{\xi}$ for some $\lambda \in \CC^\times$.  
\end{proposition}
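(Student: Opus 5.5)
We construct maps in both directions between $h_V^{-1}(\omega^2)$ and the set of pairs $(i:L\hookrightarrow V,\tilde\xi)$ modulo scaling, and check that they are mutually inverse. Take $\phi\in H^0(\End_0(V)\otimes K_C)$ with $\det\phi=-\omega^2$; we use the sign convention in which the spectral curve is $V(\tau^2-\omega^2)=V((\tau-\omega)(\tau+\omega))$, as in the proof of Lemma \ref{lemma:reducible}. By Cayley--Hamilton $(\phi-\omega)(\phi+\omega)=0$ as a map $V\to V\otimes K_C^2$. We set $L:=$ the saturation of $\ker(\phi-\omega\cdot\mathrm{id})$, a line subbundle because $\phi\neq\pm\omega$ (those are not trace-free). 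The quotient $V/L\cong\Lambda\otimes L^\vee$ then maps via $\phi-\omega$ into $V\otimes K_C$. Composing with $V\otimes K_C\to (V/L)\otimes K_C$ gives, on the quotient, the scalar $-2\omega$ (trace zero and $\phi|_L=\omega$), which is nonzero away from $\op{div}\omega$. Since $(\phi+\omega)(\phi-\omega)=0$, the image of $\phi-\omega$ lies in the saturation of $\ker(\phi+\omega)$.

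Equivalently, and this is the formulation I would actually use, write $\phi$ in a local splitting as $\begin{pmatrix}\omega & \beta\\ 0&-\omega\end{pmatrix}$ with respect to $0\to L\to V\to \Lambda\otimes L^\vee\to 0$. The off-diagonal entry $\beta$ is not globally defined. Changing a local splitting by $a\in \mathcal{H}om(\Lambda\otimes L^\vee,L)$ changes $\beta$ by $2\omega a$. Hence $\beta/(2\omega)$ gives a Čech-type cochain whose coboundary is the extension class $\xi$. Concretely, the obstruction to a global splitting is measured after dividing by $\omega$, which makes sense in $L\otimes K_C$ modulo $L$. Restricting $\beta$ to $\op{div}\omega$ produces a well-defined element $\tilde\xi\in\op{Hom}(\Lambda\otimes L^\vee, L\otimes K_{C|\op{div}\omega})$, independent of splitting since $2\omega a$ vanishes there. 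The key step is to verify, via the connecting map of the long exact sequence (\ref{eq:long_exact_delta}), that $\delta(\tilde\xi)=\xi$ up to the normalizing factor $2$. This is a standard snake/Čech computation: $\tilde\xi$ lifts locally to $\beta/ (2\omega)\cdot\omega$, and the differences of local lifts are exactly the transition data of $V$.

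Conversely, given $(L,\tilde\xi)$ with $\delta(\tilde\xi)=\xi$, exactness says the local lifts of $\tilde\xi$ to sections of $\mathcal{H}om(\Lambda\otimes L^\vee,L\otimes K_C)$ can be chosen to differ by $\omega$ times the cocycle of $\xi$. This is precisely what is needed to glue the local matrices $\begin{pmatrix}\omega&\beta_i\\0&-\omega\end{pmatrix}$ into a global $\phi$ on $V$. Different choices of the global lift differ by elements of $\op{Hom}(\Lambda\otimes L^\vee,L)$, which correspond to changing the splitting and give the same $\phi$ after an automorphism of $V$ fixing $L$. The two constructions are inverse to each other. Scaling $\tilde\xi$ corresponds to rescaling the identification of $\xi$, that is, to the automorphisms $\lambda\cdot\mathrm{id}_L$ of the subbundle, which explains the equivalence relation.

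The main obstacle I expect is bookkeeping that the fiber is taken in $V$ with $\phi$ literally, not up to automorphism. I would need to check that the choice of $L$ is forced (it is $\ker(\phi-\omega)$, and $\omega$ versus $-\omega$ is fixed by the sign convention, or else the set double counts). I would also need to check that $\tilde\xi$ is an isomorphism exactly when the spectral sheaf is a line bundle rather than merely torsion-free. I would handle the latter by a local computation at each zero of $\omega$: $\tilde\xi$ vanishing at a point means $\phi-\omega$ and $\phi+\omega$ share kernels to higher order, giving a non-locally-free spectral sheaf at the node.
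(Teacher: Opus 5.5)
Your local computation is correct, and it takes a genuinely different route from the paper's. The paper works on the spectral side. It starts from a line bundle $M$ on $\tilde C=C_1\cup C_2$ and encodes it as gluing data $(M_1,M_2,\tilde\xi)$, with $\tilde\xi$ an isomorphism over $\op{div}\omega$. It takes $L=M_1\otimes K_C^\vee$ (sections of $M$ vanishing on $C_2$), writes $\psi_*M$ as the kernel of $M_1\oplus M_2\to M_{1|\op{div}\omega}$, and cites \cite[Lemma 4.10]{nollau} for $\delta(\tilde\xi)=\xi$. In that setup, invertibility of $\tilde\xi$ and the scaling relation are built in. You instead read $L=\ker(\phi-\omega)$ and $\tilde\xi=\frac12\beta_{|\op{div}\omega}$ directly off $\phi$. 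Your rule $\beta\mapsto\beta+2\omega a$, the identity $\delta(\beta_{|\op{div}\omega})=2\xi$, and the gluing argument for the converse are all right. They avoid both BNR for reducible spectral curves and the cited lemma.

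The gap is the final claim that the two constructions are mutually inverse. The two points you defer as bookkeeping are exactly where it fails.

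First, $\phi\mapsto(L,\tilde\xi)$ does not land in the stated set, because nothing forces $\beta$ to be nonzero at the zeros of $\omega$. For $V=L\oplus(\Lambda\otimes L^\vee)$ and $\phi=\mathrm{diag}(\omega,-\omega)$ you get $\tilde\xi=0$. Even for stable wobbly $V$, the Higgs fields with $\phi_{|L}=\omega$ form an affine space over $\op{Hom}(\Lambda\otimes L^\vee,L\otimes K_C)$. Your map sends this bijectively onto the affine space $\delta^{-1}(\xi)$. For a bad subbundle $L$ (so $\ker\delta\neq 0$), that affine space typically meets the loci $\{\tilde\xi(p)=0\}$, $p\in\op{div}\omega$. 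Your proposed check that $\tilde\xi$ is invertible iff the spectral sheaf is locally free (iff $\phi(p)\neq 0$ at each zero $p$ of $\omega$) does not repair this. It shows instead that the left-hand side has to be cut down to such $\phi$. The paper makes this restriction tacitly by identifying the fiber with line bundles $M$ on $\tilde C$.

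Second, injectivity fails in general. Two lifts differing by $b\in\op{Hom}(\Lambda\otimes L^\vee,L)$ give $\phi$ and $\phi+2\omega\, i\circ b\circ\mathrm{pr}$, where $\mathrm{pr}:V\to\Lambda\otimes L^\vee$. These have the same pair $(L,\tilde\xi)$ but are distinct points of $h_V^{-1}(\omega^2)$ when $b\neq 0$. That they are conjugate under $\op{Aut}(V)$ does not help, since the fiber consists of Higgs fields on the fixed $V$. So you need $\op{Hom}(\Lambda\otimes L^\vee,L)=0$ (e.g. $V$ stable), or you must count $\phi$ modulo $\op{Aut}(V)$. The paper again does the latter implicitly by working with $M$ up to isomorphism.

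With both restrictions made explicit, your argument is a complete proof. For stable $V$ it even gives a sharper result: the whole fiber corresponds to pairs with arbitrary $\tilde\xi$ satisfying $\delta(\tilde\xi)=\xi$, and the invertible $\tilde\xi$ correspond to the line-bundle spectral sheaves.
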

\begin{proof}
	We start with a line bundle $M$ on $\tilde{C}$. The spectral curve is the union of two copies of $C$ which we call $C_1$ and $C_2$. We define 
	\begin{equation*}
		M_1 := M_{|C_1}, M_2 := M_{|C_2}.
	\end{equation*} 
	Then we have an embedding $i: M_1  \otimes K_C^\vee \hookrightarrow \psi_*M$ because we can regard any section of $M_1\otimes K_C^\vee$ as a section of $M_1$ on $C_1$ which vanishes at the points of $C_1$ connecting it to $C_2$. Hence we can extend it by zero to obtain a section of $M$. We then put $L = M_1\otimes K_C^\vee$. We also have $M_2 \cong \Lambda \otimes L^{\vee}$. Restricting a section of $M$ to $C_2$ gives a map $\psi_*\to \Lambda \otimes L^\vee$. Hence, $\psi_*M$ is an extension $\xi$ of $L$ by $\Lambda \otimes L^{\vee}$.
	Let us describe its extension class. A line bundle on $\tilde{C}$ is determined by a triple $(M_1,M_2,\tilde{\xi})$, where $\tilde{\xi}$ is an isomorphism $M_{2|\div\omega} \to M_{1|\div\omega}$. We can describe $M$ locally around the singularity corresponding to $p \in \op{supp}(\omega)$ as follows: Assume $\omega$ has multiplicity $m$ around $p$.  
	Then an element of the stalk $M\otimes \OO_{\tilde{C},p}$ is a pair $(s_1,s_2)$, where $s_i \in M_i \otimes \OO_{C,p}$ for $i=1,2$ such that
	\begin{equation*}
		s_{1|mp} = \tilde{\xi}(s_{2|mp}).
	\end{equation*}
	One can then gets the following description of $\psi_*M$:
	\begin{equation*}
		\psi_*M \cong \ker\left(M_1 \oplus M_2 \xrightarrow{(\op{ev}_{\div \omega}, -\tilde{\xi})} M_{1|\op{div}\omega}\right)
	\end{equation*}
	By \cite[Lemma 4.10]{nollau} the extension class is given by $\delta(\tilde{\xi})$.
\end{proof}
\begin{remark}
	Assume $V$ is a wobbly vector bundle. Then any $\phi \in \op{\ker} c_\xi$ has a reducible spectral curve by Lemma \ref{lemma:reducible}. By (\ref{eq:long_exact_delta}) the map $\delta$ has a nontrivial kernel which implies that $h_V^{-1}(\omega^2)$ is positive dimensional for every $\omega \in H^0(K_C)$. We see that the restricted Hitchin map of a wobbly vector bundle has many positive dimensional fibers. 
\end{remark}
\begin{corollary}
	Let $V$ be a vector bundle of rank $2$ on $C$. Write $\Lambda = \det V$ and assume $\deg \Lambda = \lambda \in \{0,1\}$. Assume $L$ is a subbundle of $V$ such that $h^1(\Lambda^\vee \otimes L^2 \otimes K_C) = 0 $. Then $\{ \omega^2 \mid \omega \in H^0(K_C)\}$ is in the image of $h_V$.
\end{corollary}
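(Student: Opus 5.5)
Fix $\omega \in H^0(K_C)$, $\omega\neq 0$ (for $\omega=0$ the zero endomorphism works). The plan is to apply Proposition \ref{prop:reducible} to the given subbundle $L \hookrightarrow V$. By that proposition it suffices to find an isomorphism $\tilde{\xi}: \Lambda\otimes L^\vee_{|\op{div}\omega} \xrightarrow{\sim} L\otimes K_{C|\op{div}\omega}$ with $\delta(\tilde{\xi}) = \xi$, where $\xi \in \op{Ext}^1(\Lambda\otimes L^\vee, L)$ is the extension class of $0\to L \to V \to \Lambda\otimes L^\vee\to 0$. The pair $(L,\tilde{\xi})$ then gives a line bundle $M$ on $\tilde{C} = V(\tau^2-\omega^2)$ with $\psi_*M\cong V$. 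The induced twisted endomorphism is trace free with determinant $-\omega^2$, so after rescaling by $\sqrt{-1}$ we get an element of $h_V^{-1}(\omega^2)$.

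\textbf{Surjectivity of $\delta$.} In the long exact sequence (\ref{eq:long_exact_delta}) the term after $\op{Ext}^1(\Lambda\otimes L^\vee,L)$ is $\op{Ext}^1(\Lambda\otimes L^\vee,L\otimes K_C) = H^1(\Lambda^\vee\otimes L^2\otimes K_C)$, which vanishes by hypothesis. Hence $\delta$ is surjective, and there is some $\tilde{\xi}_0$ with $\delta(\tilde{\xi}_0)=\xi$. The issue is that $\tilde{\xi}_0$ must be an isomorphism, i.e.\ nonvanishing at every point of the finite scheme $\op{div}\omega$.

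\textbf{Nonvanishing (main obstacle).} The preimage $\delta^{-1}(\xi)$ is the affine space $\tilde{\xi}_0 + \op{im}(\op{Hom}(\Lambda\otimes L^\vee,L\otimes K_C)\to \op{Hom}(\Lambda\otimes L^\vee, L\otimes K_{C|\op{div}\omega}))$. The non-isomorphisms form a finite union of hyperplanes, one for each point $p\in\op{supp}\omega$, given by vanishing of the leading coefficient at $p$. So the task is to show the affine space is not contained in that union, which reduces to showing that for each $p$ the linear functional ``leading coefficient at $p$'' is not identically zero on the affine space.

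I would first try to use the degree. Set $N:=\Lambda^\vee\otimes L^2$. The vanishing $h^1(N\otimes K_C)=0$ gives $h^0(N^\vee)=0$, so $\deg N\geq 0$, and I would try to conclude that $N\otimes K_C$ separates the points of $\op{div}\omega$. Since $\deg(N\otimes K_C)\ge 2g-2$, this is not automatic, and the equality case $N\cong\OO_C$ is exactly where the method can fail. If the translate does lie in a hyperplane for some $p$, then every section of $N\otimes K_C$ vanishes at $p$. In that case I would instead perturb $\tilde{\xi}_0$ using the kernel of $\delta$ locally at $p$, or use that $\xi$ may be rescaled: only the class of $V$ matters, so one may replace $\xi$ by $c\,\xi$ for $c\in\CC^\times$.

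If this still fails, I would handle the remaining case $\deg N=0$, $N\cong\OO_C$ separately. There $V$ is an extension of $L$ by $L$ with $\lambda$ even, and $\tilde{\xi}$ is a section of $K_{C|\op{div}\omega}$ (a local trivialization). One can then check directly that the section induced by $\omega$-division modifications is invertible, because $\xi=\delta(\tilde\xi)$ classes come from Laurent tails $f/\omega$ and any tail can be adjusted by a constant times $1/\omega$, which has nonvanishing leading coefficient everywhere on $\op{div}\omega$.
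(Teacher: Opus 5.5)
Your first step, surjectivity of $\delta$ from $h^1(\Lambda^\vee\otimes L^2\otimes K_C)=0$, is the whole of the paper's argument. The paper then applies Proposition~\ref{prop:reducible} without checking that the preimage $\tilde{\xi}$ is an isomorphism. You are right that the Proposition as stated requires this. However, your nonvanishing step has a genuine gap, and in the form you set it up it cannot be closed.

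\begin{itemize}
\item First, $h^0(N^\vee)=0$ does not give $\deg N\geq 0$; it only gives $\deg N\geq 1-g$. In fact, for stable $V$ one has $\deg L<\lambda/2$, so $\deg N=2\deg L-\lambda<0$ always.
\item Second, the case $N\cong\OO_C$ that you single out is already excluded by the hypothesis.
\item The real obstruction is different. By Riemann--Roch and Serre duality, a point $p\in\op{supp}\omega$ is a base point of $N\otimes K_C$ exactly when $h^0(N^\vee(p))\neq 0$.
\item At such a $p$ your remedies do nothing. The kernel of $\delta$ is the image of $H^0(N\otimes K_C)$, whose elements all vanish at $p$. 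Rescaling does not help either, since $\delta^{-1}(c\xi)=c\,\delta^{-1}(\xi)$.
\item Concretely, take $\deg N=1-g$ with $N^\vee$ not effective. Then $H^0(N\otimes K_C)=0$, so $\delta$ is an isomorphism onto $H^1(N)$. For every $\xi$ in the hyperplane $\delta(\{\tilde{\xi} : \tilde{\xi} \text{ vanishes at } p\})$, for example $\xi=0$, the unique preimage vanishes at $p$.
\item The final paragraph about Laurent tails is not an argument.
\end{itemize}

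The fix is to drop the isomorphism requirement, which the conclusion does not need. For any $\tilde{\xi}$ with $\delta(\tilde{\xi})=\xi$, put
\[
W:=\ker\Bigl((L\otimes K_C)\oplus(\Lambda\otimes L^\vee)\xrightarrow{(\op{ev}_{\op{div}\omega},\,-\tilde{\xi})}L\otimes K_{C|\op{div}\omega}\Bigr).
\]
\begin{itemize}
\item $W$ is the pullback of $0\to L\xrightarrow{\omega}L\otimes K_C\to L\otimes K_{C|\op{div}\omega}\to 0$ along $\tilde{\xi}$ composed with restriction. Hence it is an extension of $\Lambda\otimes L^\vee$ by $L$ with class $\delta(\tilde{\xi})=\xi$, and $W\cong V$. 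No invertibility of $\tilde{\xi}$ is used here.
\item Since $\omega$ vanishes on $\op{div}\omega$, the endomorphism $\op{diag}(\omega,-\omega)$ preserves $W$. It restricts to a trace-free twisted endomorphism of $V$ with determinant $-\omega^2$; multiply by $\sqrt{-1}$ to land on $\omega^2$.
\end{itemize}
When $\tilde{\xi}$ is not invertible, this Higgs field corresponds to a torsion-free but not locally free sheaf on $\tilde{C}$. That is why the line-bundle formulation of Proposition~\ref{prop:reducible} does not see it.
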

\begin{proof}
	Take any $\omega \in H^0(K_C)$. By $(\ref{eq:long_exact_delta})$ the cokernel of $\delta$ is isomorphic to $H^1(\Lambda^\vee\otimes L^2\otimes K_C) = 0$. Hence the extension $\xi$ corresponding to $V$ is in the image of $\delta$. By (\ref{prop:reducible}) $\omega^2$ is in the image of $h_V$.
\end{proof}
\section{Analysis of the blowup}\label{section:blowup}
In this section we investigate the blowup considered in section \ref{section:wobbly}. To make our arguments more transparent we do this in slightly more generality: Consider $\PP^N$, a linear subspace $M_0$ of $\PP^N$ of codimension $m$ and $p\in M_0$. Assume $2 \leq m \leq N-1$. We write $X_1 = \op{Bl}_p\PP^N$ and $\pi_1$ for the natural map $X_1 \to \PP^N$. Call its exceptional divisor $E_1$. Then $M:= E_1 \cap \pi_1^{-1}(M_0)$ is a hyperplane in $E_1$ of codimension $m$. Denote the blowup $\op{Bl}_MX_1$ by $X_2$ and its map to $X_1$ by $\pi_2$. Let $E_2$ be its exceptional divisor.

\begin{lemma}\label{lemma:linear_system}
	The map $\varphi_{2H-\pi_2^*E_1-E_2}: X_2 \to \PP H^0(\OO(2H-\pi_2^*E_1 - E_2))^\vee$ is injective on $X_2 \setminus (\pi_2\circ \pi_1)^{-1}(M_0)$. 
 \end{lemma}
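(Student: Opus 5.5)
We will work in explicit coordinates and only use that certain quadrics give sections of $\OO(2H-\pi_2^*E_1-E_2)$. Choose homogeneous coordinates $x_0,\dots,x_N$ on $\PP^N$ with $p=[1:0:\dots:0]$ and $M_0=V(x_1,\dots,x_m)$; this is possible since $M_0$ is a linear subspace of codimension $m$ containing $p$. Write $x'=(x_1,\dots,x_N)$. The claim is that the following quadrics pull back to sections of $\OO(2H-\pi_2^*E_1-E_2)$:
\begin{equation*}
	x_0x_i \ (1\le i\le m), \qquad x_ix_j \ (1\le i\le j\le N).
\end{equation*}
All of them vanish at $p$, so their pullbacks to $X_1$ lie in $H^0(\OO(2H-E_1))$. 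In the affine chart $x_0=1$ with blowup coordinates $x'=u\cdot v$ (with $u$ the equation of $E_1$ and $[v]\in E_1\cong\PP^{N-1}$), the quadric $x_ix_j$ vanishes to order $2$ along $E_1$. Hence after removing one copy of $E_1$ it still vanishes on $E_1$, in particular on $M$. The quadric $x_0x_i=u v_i$ with $i\le m$ becomes $v_i$ after removing $E_1$, and $v_i$ vanishes on $M=E_1\cap\{v_1=\dots=v_m=0\}$. So all these sections vanish along $M$ and pull back to sections of $\OO(2H-\pi_2^*E_1-E_2)$ on $X_2$.

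This containment is all that is needed. The map given by this subsystem is the composition of $\varphi_{2H-\pi_2^*E_1-E_2}$ with a linear projection, wherever the subsystem has no base points. So it suffices to prove that the subsystem map is defined and injective on $X_2\setminus(\pi_2\circ\pi_1)^{-1}(M_0)$. Since $p\in M_0$, both exceptional divisors $E_1$ and $E_2$ lie over $M_0$. Hence $\pi_2\circ\pi_1$ identifies $X_2\setminus(\pi_2\circ\pi_1)^{-1}(M_0)$ with $\PP^N\setminus M_0$, and we may work there with the map
\begin{equation*}
	[x]\mapsto [\,x_0x_1:\dots:x_0x_m : (x_ix_j)_{1\le i\le j\le N}\,].
\end{equation*}

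Let $[x]\notin M_0$. Then some $x_i$ with $i\le m$ is nonzero, so $x_i^2\neq 0$ and the map is defined at $[x]$. Suppose $[x]$ and $[y]$ outside $M_0$ have the same image. The coordinates $x_ix_j$ give the Veronese embedding of $\PP^{N-1}$, which is injective, and $x'\neq 0$. After rescaling we may therefore assume $y'=x'$; the whole image vector is then equal, not just proportional, since the Veronese coordinates already agree and are nonzero. Pick $i\le m$ with $x_i\neq0$. Comparing the coordinates $x_0x_i=y_0x_i$ gives $x_0=y_0$, hence $[x]=[y]$.

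The only step needing care is the local check that $x_0x_i$ for $i\le m$ vanishes along $M$ after removing $E_1$, and that $x_ix_j$ vanishes on $E_1$ to order $2$. Both are immediate in the blowup chart written above. No description of the full space $H^0(\OO(2H-\pi_2^*E_1-E_2))$ is required.
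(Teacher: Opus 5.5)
Your proof is correct and follows essentially the same route as the paper: the same adapted coordinates, the same explicit sections (all quadrics $x_ix_j$ with $i,j\geq 1$ together with $x_0x_i$ for the coordinates $x_i$ cutting out $M_0$), and the same reduction to injectivity of the explicit map on $\PP^N\setminus M_0$. The differences are minor. You only use that these sections lie in $H^0(\OO(2H-\pi_2^*E_1-E_2))$ and pass injectivity through the linear projection, which spares you the paper's unproved claim that they form a basis (strictly they only span, since $x_iy_j=x_jy_i$ on $X_1$). You also separate points via the Veronese coordinates, where the paper uses the sections $x_i l$ for a linear form $l$ in the $M_0$-coordinates that is nonzero at both points.
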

\begin{proof}
	We introduce homogeneous coordinates $x_0,\ldots,x_N$ such that 
	\begin{align*}
		\{p\} &= V(x_1,\ldots,x_N),\\
		M_0 &= V(x_{N-m+1},\ldots,x_N).
	\end{align*} 
	The blowup $X_1$ is a closed subscheme of $\PP^N \times \PP^{N-1}$ defined by
	\begin{equation*}
		x_iy_j = x_jy_i,\quad i,j\in \{1,\ldots,N\}.
	\end{equation*} 
	Here $y_1,\ldots,y_N$ are homogeneous coordinates of $\PP^{N-1}$. Notice that $M$ can be represented as a zero locus in two ways: 
	\begin{align*}
		M &= V(x_1,\ldots,x_N, y_{N-m+1},\ldots,y_N)\\
		&= V\left(\{x_{i}y_j\}_{i,j \in \{1,\ldots,N\}}, \quad \{x_0y_j\}_{ j \in \{N-m+1,\ldots,N\}}\right).
	\end{align*}
	The second set of equations has the advantage that all elements are sections of $\OO(2H-E_1)$. We obtain the global sections of $\OO(2H - \pi_2^*E_1 - E_2)$:
	\begin{equation*}
		H^0(X_2,\OO(2H-\pi_2^*E_1-E_2)) \cong H^0X_1,\OO(2H-E_1)\otimes \mathcal{I}_{M}).
	\end{equation*}
	The morphism $\varphi_{2H-\pi_2^*E_1-E_2}$ restricted to the complement of $E_2$ coincides with 
	\begin{equation}\label{eq:blowup_sections}
		f = [\{x_{i}y_j\}_{i,j \in \{1,\ldots,N\}}, \quad \{x_0y_j\}_{ j \in \{N-m+1,\ldots,N\}}] : X_1 \setminus \pi_1^{-1}(M_0) \to \PP^{N^2+m-1} 
	\end{equation}
	because the sections defining $f$ are a basis of $H^0(X_1,\OO(2H-E_1)\otimes \mathcal{I}_{M})$. The map $f$ is injective on $X_1 \setminus \pi_1^{-1}(M_0)$: Take $q,r$ to be points in this open set. There exists $l \in \langle y_{N-m+1},\ldots,y_N\rangle$ such that $l(q) \neq 0$ and $l(r) \neq 0$. Then we can use the sections $(x_il)_{i=0}^n$ to separate $r$ and $q$.  
\end{proof}
\begin{lemma}
	The degree of $\varphi_{2H-\pi_2^*E_1-E_2}: X_2 \to \PP H^0(\OO(2H-\pi_2^*E_1 - E_2))^\vee$ is given by
	\begin{equation*}
		\deg \varphi_{2H-\pi_2^*E_1-E_2} = 2^N- 2^{N-m}.
	\end{equation*}
\end{lemma}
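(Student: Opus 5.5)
The degree of the morphism $\varphi := \varphi_{2H-\pi_2^*E_1-E_2}$ onto its image equals the top self-intersection
\begin{equation*}
	D^N, \qquad D := 2H - \pi_2^*E_1 - E_2 ,
\end{equation*}
divided by the degree of the image. I interpret the statement as asserting $D^N = 2^N - 2^{N-m}$, with $\varphi$ birational onto its image. Birationality follows from Lemma \ref{lemma:linear_system}, since $\varphi$ is injective on a dense open set. The map is also a morphism: by the proof of Lemma \ref{lemma:linear_system}, the sections of $\OO(2H-E_1)\otimes\mathcal{I}_M$ generate this ideal sheaf twisted by $\OO(2H-E_1)$, so they become base point free on $X_2$. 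Hence it suffices to compute $D^N$ by intersection theory on the iterated blowup.

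The plan is to expand $D^N$ in stages. First, on $X_1$, I use the standard intersection numbers for the blowup of a point: $H^N=1$, $H^a E_1^{N-a}=0$ for $0<a<N$, and $E_1^N = (-1)^{N-1}$. This gives $(2H-E_1)^N = 2^N - 1$.

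Second, I pass to $X_2 = \op{Bl}_M X_1$, where $M\cong \PP^{N-1-m}$ is a linear subspace of $E_1\cong\PP^{N-1}$ of codimension $m\geq 2$ in $X_1$. I write
\begin{equation*}
	D^N = (\pi_2^*A - E_2)^N, \qquad A = 2H - E_1 ,
\end{equation*}
and expand binomially. The term $\pi_2^*A^N$ contributes $2^N-1$. Each mixed term $\pi_2^*A^{j}\cdot E_2^{N-j}$ with $j<N$ is computed on $E_2 = \PP(N_{M/X_1})$ via Segre classes:
\begin{equation*}
	E_2^{N-j}\cdot \pi_2^*A^j = (-1)^{N-j-1}\int_M A|_M^{\,j}\, s_{N-1-m-j+?}(N_{M/X_1}),
\end{equation*}
with the standard indexing. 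So I need the restrictions $H|_M = 0$ and $E_1|_M = -h$, where $h$ is the hyperplane class of $M$. This gives $A|_M = h$. I also need the normal bundle, which sits in the sequence
\begin{equation*}
	0 \to N_{M/E_1} \to N_{M/X_1} \to N_{E_1/X_1}|_M \to 0,
\end{equation*}
so $N_{M/X_1} \cong \OO(1)^{m-1}\oplus\OO(-1)$ up to extension. Its total Chern class is $(1+h)^{m-1}(1-h)$, and its Segre class is the inverse, $(1+h)^{-(m-1)}(1-h)^{-1}$.

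Third, I sum the resulting numbers on $M\cong\PP^{N-1-m}$. The expected outcome is that the correction equals $-(2^{N-m}-1)$, giving
\begin{equation*}
	D^N = 2^N - 1 - (2^{N-m}-1) = 2^N - 2^{N-m}.
\end{equation*}

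As a cross-check, which may be the cleaner route, I would compute $D^N$ as the number of points of $X_2$ on which $N$ general members of $|D|$ meet. By Lemma \ref{lemma:linear_system} and the explicit sections \eqref{eq:blowup_sections}, these sections are spanned by the quadrics $x_i x_j$ ($i,j\geq1$) and $x_0 x_j$ ($j>N-m$). So the count is the number of solutions of $N$ general such quadrics on $\PP^N$ outside the base locus $V(x_{N-m+1},\dots,x_N)$ with its embedded point. Bézout gives $2^N$, and the excess contribution of the base scheme should be $2^{N-m}$. One can see this by restricting the quadrics to $M_0\cong\PP^{N-m}$, where the generators become $x_ix_j$ with $1\le i,j\le N-m$, i.e. quadrics singular at $p$.

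The main obstacle is the Segre class bookkeeping in the second step, namely getting the signs and indices right and confirming that the resulting binomial sum collapses to $2^{N-m}-1$. I would test the computation in the smallest case, $N=3$, $m=2$, where $M$ is a point and the answer should be $8-2=6$.
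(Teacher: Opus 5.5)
Your route is the paper's route: expand the top power of $D$ on $X_2$, use $E_2^k=(-1)^{k-1}j_{2*}(\zeta_2^{k-1})$ with $\zeta_2=c_1(\OO_{E_2}(1))$, and push down to $M$ through the Chern classes of $N_{M/X_1}$. But the proposal stops exactly where the proof lies. The Segre index is left as ``$?$'' and the collapse of the sum to $2^{N-m}-1$ is only ``expected''.

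The data you would feed into that computation is also wrong. Since $M\cong\PP^{N-1-m}$ has codimension $m$ in $E_1\cong\PP^{N-1}$, it has codimension $m+1$ in $X_1$. So $N_{M/X_1}$ is an extension of $\OO_M(-1)$ by $N_{M/E_1}\cong\OO_M(1)^{\oplus m}$, of rank $m+1$ with $c(N_{M/X_1})=(1+h)^m(1-h)$. This is exactly the relation $(\zeta_2+H_2)^m(\zeta_2-H_2)=0$ in the paper's presentation of $A^\bullet(E_2)$. With your rank-$m$ bundle $(1+h)^{m-1}(1-h)$, the projectivization has dimension $N-2$ and cannot be the exceptional divisor. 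Your own test case shows this: for $N=3$, $m=2$ you blow up a point of a threefold, so $E_2\cong\PP^2$. The off-by-one is not cosmetic, since making your data consistent by taking $M\cong\PP^{N-m}$ instead yields $2^N-2^{N-m+1}$.

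With the correct normal bundle the missing step is short. Put $A=2H-E_1$, so $A|_M=h$ and $A^N=2^N-1$. Since $\int_{E_2}h^{N-k}\zeta_2^{k-1}=\int_M h^{N-k}s_{k-1-m}(N_{M/X_1})$ with $s=c^{-1}$, you get
\begin{equation*}
D^N=2^N-1-\sum_{k=m+1}^{N}\binom{N}{k}\,\bigl[h^{k-1-m}\bigr]\Bigl((1+h)^{-m}(1-h)^{-1}\Bigr),
\end{equation*}
where $[h^j]F$ denotes the coefficient of $h^j$ in $F$. Using $\binom{N}{k}=\binom{N}{N-k}$, the sum is the coefficient of $h^{N-m-1}$ in $(1+h)^{N}(1+h)^{-m}(1-h)^{-1}=(1+h)^{N-m}(1-h)^{-1}$. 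That coefficient is $\sum_{i=0}^{N-m-1}\binom{N-m}{i}=2^{N-m}-1$, giving $2^N-2^{N-m}$. This generating-function step does the same job as the paper's manipulation with $H_2(H_2+\zeta_2)^m=\zeta_2(H_2+\zeta_2)^m$.

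Your B\'ezout cross-check misidentifies the base scheme. It is not $V(x_{N-m+1},\dots,x_N)=M_0$; already $x_1^2$ does not vanish on $M_0$. It is the zero-dimensional scheme at $p$ with ideal $\mathfrak{m}_p^2+I_{M_0}$. Its excess contribution is the Samuel multiplicity of this ideal, which equals $2^{N-m}$ because $(x_1^2,\dots,x_{N-m}^2,x_{N-m+1},\dots,x_N)$ is a reduction of it. Written out that way, the cross-check would be a genuinely independent proof that bypasses the Chow ring of $E_2$, but as it stands it only asserts the answer.
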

\begin{proof}
	The degree is given by the following intersection number: 
	\begin{equation*}
		\deg h'' = \int_{X_2} (2H+\pi_2^*E_1+E_2)^{N}.
	\end{equation*}
	The exceptional divisors $E_1$ and $E_2$ are both the projectivization of the respective normal bundles. Denote by $\zeta_i$ the class of $\OO_{E_i}(1)$ in the Chow ring of $E_i$. We will use a few standard identities in the Chow ring of a blowup. They can be found in \cite[Proposition 13.12]{eisenbud_harris}. We will denote the embedding of $E_i$ into $X_i$ by $j_i$. For $\alpha \in A^{\bullet}(X_1), \beta, \gamma \in A^{\bullet}(E_2)$:
	\begin{align}
		\pi_2^*(\alpha) j_{2*}(\beta) &= j_{2*}(\alpha'\cdot\beta)\label{eq:identity_a} \\
		j_{2*}(\beta)j_{2*}(\gamma) &= j_{2*}(\beta\gamma\zeta)\label{eq:identity_b}.
	\end{align}
	Here $\alpha'$ is the pullback of $\alpha$ along $E_2 \to M \to X_1$. As a consequence of (\ref{eq:identity_b}) we find:
	\begin{equation}\label{eq:identity_c}
		E_2^k = j_{2*}(1)^k =(-1)^{k-1}j_{2*}(\zeta_2^{k-1}).
	\end{equation} In our calculation we will need the Chow polynomial of the normal bundle $N_MX_1$. It sits in the following normal sequence:
	\begin{equation*}
		0 \to N_ME_1 \to N_MX_1 \to N_{E_1}X_1 \to 0.
	\end{equation*}
	By the Whitney sum formula we get:
	\begin{equation*}
		c_t(N_MX_1) = c_t(N_ME_1)c_t(N_{E_1}X_1) = (t + \zeta_{1|M})^m(t - \zeta_{1|M}).
	\end{equation*}
	Using \cite[Theorem 9.6]{eisenbud_harris} we can write down a presentation of $E_2 \cong \PP_M\left( N_MX_1\right)$:
	\begin{align*}
		A^\bullet(E_2) &= A^\bullet(M)[\zeta_2]/\left((t + \zeta_{1|M})^m(t - \zeta_{1|M})\right)\\
		&= \ZZ[H_2,\zeta_2]/\left(H_2^{N-m},(\zeta_2+H_2)^m(\zeta_2 -H_2 )\right).
	\end{align*}
	Here $H_2$ is the pullback of $\zeta_{1|M}$ to $E_2$. 
	As a last preparation we note that $H\cdot \pi_2^*E_1 = H\cdot E_2 = 0$ because the restriction of the hyperplane class in $\PP\cE$ to the point $[\phi]$ is trivial. Now we are in a position to compute:
	\begin{align*}
		(H-\pi_2^*E_1-E_2)^N &= H^N + (-1)^N(\pi^*_2E_1+E_2)^N.
	\end{align*} 
	We inspect the second term:
	\begin{align*}
		(\pi^*_2E_1 +E_2)^N&= \pi_2^*E_1^N + \sum_{k=1}^N \binom{N}{k}\pi_2^*(E_1^{N-k})E_2^k \\
		&= \pi_2^*E_1^N + \sum_{k=1}^{N}\binom{N}{k}\pi_2^*(E_1^{N-k})j_{2*}((-1)^{k-1}\zeta_2^{k-1})\qquad (\ref{eq:identity_c})\\
		&= \pi^*E_1^N + (-1)^{N-1}j_{2*}\left(\sum_{k=1}^{N}H_2^{N-k} \zeta_2^{k-1}\right)\qquad(\ref{eq:identity_a}).
	\end{align*} 
	We now use the identity $\sum_{k=1}^N\binom{N}{k}a^{N-k}b^{k-1} = \sum_{l=0}^{N-1}(a+b)^{N-1-l}b^l$ to deduce:
	\begin{align*}
		\sum_{k=1}^{N}H_2^{N-k} \zeta_2^{k-1} &= \sum_{l=0}^{N-1} (H_2 + \zeta_2)^{N-1-l}H_2^l \\
		&=\sum_{l=1}^{N-m-1} (H_2 + \zeta_2)^m(H_2+\zeta_2)^{N-m-1-l}H_2^l \\
		&= \sum_{l=0}^{N-m-1} (H_2 + \zeta_2)^m 2^{N-m-1-l}H_2^{N-m-1}\\
		&= \sum_{l=0}^{N-m-1}2^{N-m-1-l}\zeta_2^mH_2^{N-m-1}\\
		&= (2^{N-m}-1)\zeta_2^mH_2^{N-m-1}.
	\end{align*}
	Let us explain the steps: We first used the identity $H_2^{N-m} = 0$ to only sum until $l = N-m-1$. Then we applied $H_2(H_2 + \zeta_2)^m = \zeta_2 (H_2 + \zeta_2)^m$ to deduce $(H_2 + \zeta_2)^m(H_2+\zeta_2)^{N-m-1-l} = 2^{N-m-1-l}(H_2+\zeta_2)^mH_2^{N-m-1-l}$. In the next step we used that $H_2$ is nilpotent of order $N-m$. Now we observe that $\zeta_2^mH_2^{N-m-1}$ is the fundamental class of $E_2$. Putting everything together we find:
	\begin{align*}
		\int_{X_2}(2H+\pi_2^*E_2+E_1)^N &= \int_{X_2}2^NH^N +(-1)^{N-1} \int_{X_1}E_1^{N} - \int_{E_2} (2^{N-m} - 1)\zeta_2^mH_2^{N-m-1} \\
		&= 2^N + (-1)^N(-1)^{N-1}- (2^{N-m}-1)\\
		&= 2^N - 2^{N-m}.
	\end{align*}
\end{proof}
\begin{proof} (of Theorem \ref{thm:C} (ii))
	 Assume that $V$ is of the same form as in part (i) of this Theorem. If $(k,\lambda) \neq (g,0)$, we saw that we can resolve the rational map $\PP \cE \to \PP H^0(K_C^2)$ by blowing up twice. The degree of $h''$ equals the degree of $\phi_{2H-\pi_2^*E_1-E_2}$ and is given by the previous Lemma. We have $N = 3g-3$ and we find $m$ using Lemma \ref{lemma:image_c} and the Riemann-Roch theorem:
	\begin{align*}
		m &= \op{codim}_{H^0(\End_0(V)\otimes K_C)} \ker(c_\xi)\\
		&= \dim \op{Hom}(L,\det( V) \otimes L^\vee\otimes K_C) -1\\
		&= \deg (L^{-2}\otimes \det V \otimes K_C) + 1-g -1 \\
		&= 2k+g-4 +\lambda.
	\end{align*}
	We are left with the special case $(k,\lambda) = (g,0)$ which is much easier. We showed that blowing up $[\phi]$ suffices to resolve the indeterminacy. The degree of $h' : \op{Bl}_{[\phi]}\PP\cE \to \PP H^0(K_C^2)$ is given by 
	\begin{equation*}
		\int_{X_1} (2H-E_1)^{3g-4}= 2^{3g-4} -1.
	\end{equation*}
To finish the proof we note that $\deg h_V$ is twice the degree of $h: \PP\cE \dashrightarrow \PP H^0(K_C^2)$.
\end{proof}
\printbibliography
\end{document}